\documentclass{siamart251216}
\usepackage{algorithmic}

\usepackage{algorithm}
\usepackage{float}
\usepackage{amsfonts,amsmath,amssymb}
\usepackage{booktabs}
\usepackage{threeparttable}
\usepackage{graphicx}
\usepackage{multirow}
\usepackage{makecell}
\usepackage{subcaption}
\newcommand{\BA}{\mathbf{A}}
\newcommand{\BB}{\mathbf{B}}
\newcommand{\BC}{\mathbf{C}}
\newcommand{\BD}{\mathbf{D}}
\newcommand{\BE}{\mathbf{E}}
\newcommand{\BH}{\mathbf{H}}
\newcommand{\BI}{\mathbf{I}}
\newcommand{\BL}{\mathbf{L}}
\newcommand{\BM}{\mathbf{M}}
\newcommand{\BP}{\mathbf{P}}
\newcommand{\BQ}{\mathbf{Q}}
\newcommand{\BS}{\mathbf{S}}
\newcommand{\BT}{\mathbf{T}}
\newcommand{\BU}{\mathbf{U}}
\newcommand{\BV}{\mathbf{V}}
\newcommand{\BX}{\mathbf{X}}
\newcommand{\BZ}{\mathbf{Z}}
\newcommand{\BSigma}{\boldsymbol{\Sigma}}
\newcommand{\BOmega}{\boldsymbol{\Omega}}
\newcommand{\BPsi}{\boldsymbol{\Psi}}
\newcommand{\BPi}{\boldsymbol{\Pi}}
\newcommand{\bb}{\mathbf{b}}
\newcommand{\bc}{\mathbf{c}}

\newcommand{\be}{\mathbf{e}}
\newcommand{\bff}{\mathbf{f}}
\newcommand{\bG}{\mathbf{G}}
\newcommand{\bg}{\mathbf{g}}
\newcommand{\bn}{\mathbf{n}}
\newcommand{\bq}{\mathbf{q}}
\newcommand{\br}{\mathbf{r}}
\newcommand{\bs}{\mathbf{s}}
\newcommand{\bu}{\mathbf{u}}
\newcommand{\bv}{\mathbf{v}}
\newcommand{\bw}{\mathbf{w}}
\newcommand{\bx}{\mathbf{x}}
\newcommand{\by}{\mathbf{y}}
\newcommand{\bz}{\mathbf{z}}
\newcommand{\bphi}{\boldsymbol{\phi}}
\newcommand{\bpsi}{\boldsymbol{\psi}}
\newcommand{\bepsilon}{\boldsymbol{\epsilon}}
\newcommand{\half}{\frac{1}{2}}

\newcommand{\pzc}[1]{\textcolor{violet}{#1}}

\newtheorem{remark}{Remark}
\title{Low-rank SI-DSA and GMRES-DSA for Radiative Transfer Equation with Adaptive Accuracy Control}

\author{Chenxi Han\footnote{Department of Mathematics, The Hong Kong University of Science and Technology, Clear Water Bay, Kowloon, Hong Kong, China. E-mail: chenxi.han@connect.ust.hk.},\quad Lukas Einkemmer \footnote{ Department of Mathematics, University of Innsbruck, Technikerstrasse 13, Innsbruck, A-6020, Austria.
E-mail: lukas.einkemmer@uibk.ac.at.},\quad Wei Guo\footnote{Department of Mathematics and Statistics, Texas Tech University, Lubbock, TX, 70409, USA.},\quad   Zhichao Peng\footnote{Department of Mathematics, The Hong Kong University of Science and Technology, Clear Water Bay, Kowloon, Hong Kong, China. Corresponding author, E-mail: pengzhic@ust.hk.}}

\date{} 

\begin{document}
\maketitle

\begin{abstract}
Recently, efficient sweep-based low-rank iterative solvers have been
developed to reduce the computational costs of steady-state solves and implicit time integration of the radiative transfer equation (RTE). In particular, angular
sampling based on the discrete empirical interpolation method (DEIM) or on error-indicated randomized greedy selection enables efficient, non-intrusive reuse of existing transport sweep implementations.
However, randomized-greedy low-rank source iteration with diffusion synthetic acceleration (SI--DSA) enforces uniformly tight accuracy tolerances across all source iterations, thereby oversolving the early iterations, and it does not explicitly reuse angular bases from preceding iterations. More critically, like its full-rank counterpart, this approach
may lose effectiveness in multiscale problems with strong material
discontinuities.  We address these limitations through two complementary developments. First, we introduce adaptive accuracy control into randomized-greedy low-rank SI--DSA, reducing intermediate ranks and computational cost while maintaining the accuracy required for convergence. A two-stage hybrid sampling strategy can provide further acceleration by switching to DEIM-based sampling near convergence, once the preceding angular basis becomes sufficiently representative. Second, we develop an inexact flexible GMRES--DSA solver for the integral scalar-flux reformulation, in which sweep-based low-rank approximations are confined to the construction of the right-hand side and the matrix--vector products. The scalar-flux Krylov basis is not compressed, thereby preserving Arnoldi orthogonality in exact arithmetic and allowing established inexact-GMRES analysis to guide adaptive  accuracy control of low-rank operations.  We present numerical results on benchmark problems that demonstrate the effectiveness of the proposed methods.
\end{abstract}

%

\section{Introduction}

The radiative transfer equation (RTE) models the propagation and interaction of particles, including neutrons, photons, and protons, with a background medium. It arises in nuclear engineering, biomedical engineering, astrophysics, and atmospheric science. Its dependence on both position and angular direction makes the phase space high-dimensional and classical deterministic discretizations computationally costly. 

To mitigate this computational challenge, low-rank solvers leveraging low-rank matrix or tensor compressions in the phase space have been developed for RTE. Two principal approaches are dynamical low rank (DLR)
\cite{koch2007dynamical} and step-and-truncation (SAT)
\cite{kormann2015semi,dektor2021rank,guo2022low}. DLR evolves the solution on a low-rank manifold through tangent-space projection, whereas SAT advances it in an enlarged space and then truncates. DLR methods
for the RTE have been developed using first- and second-order formulations
\cite{peng2020low,ceruti2022rank,hauck2023predictor,haut2026efficient},
micro--macro decompositions
\cite{ding2021dynamical,einkemmer2024asymptotic,li2026asymptotic},
high-order-low-order method \cite{peng2021high},  pseudo-time
marching for proton transport
\cite{kusch2021robust,kusch2023robust}, domain decomposition \cite{brunner2026domain} and semi-Lagrangian method \cite{li2026asymptotic}. SAT methods have been
developed for single-group  \cite{sands2024high} and multigroup RTE \cite{truong2024tensor,deshpande2026multigroup,ortega2026tensor}. Rather than providing an exhaustive literature review, we refer readers to
\cite{bachmayr2023low,einkemmer2025review} for overviews of low-rank methods for
kinetic equations and general high-dimensional problems and to
\cite{ghahremani2024cross,dektor2025collocation,dektor2025interpolatory,sands2025adaptive,carrel2025interpolatory} for recent sampling-based developments.

Recently, low-rank source iteration (SI) or Richardson iterations have been developed for both steady-state problems \cite{bachmayr2026low,guo2026inexact,Guo2026HighlyER} and implicit time marching \cite{peng2023sweep,haut2026efficient}. 
Despite reducing storage, generic low-rank solvers based on inner low-rank Krylov methods \cite{kressner2011low} or alternating optimization \cite{dolgov2014alternating} may fail to achieve speedups over established fast full-rank iterative solvers on challenging problems \cite{guo2026inexact,ortega2026tensor}, and may even encounter convergence difficulties \cite{ortega2026tensor}. Reusing established transport sweeps  \cite{Adams2002FastIM,larsen2009advances} and diffusion synthetic acceleration (DSA)  \cite{Adams2002FastIM,larsen2009advances} preconditioners is therefore highly desirable for efficient low-rank iterative solvers. Transport sweeps efficiently solve the streaming--collision equations arising from discrete-ordinates angular discretization, while DSA accelerates convergence. However, the methods in \cite{bachmayr2026low,guo2026inexact} and the $P_N$-like method in \cite{haut2026efficient} employ second-order reformulations, which do not directly admit transport sweeps and may lead to ill-conditioned or singular systems in near-void or void regions. The DLR method in \cite{peng2023sweep} retains the first-order formulation and transport sweeps but requires intrusive modifications to the sweep implementation. In contrast, the low-rank SI--DSA methods for $S_N$ formulations in \cite{haut2026efficient,Guo2026HighlyER} non-intrusively reuse existing sweep implementations through DEIM-based and randomized greedy angular sampling, respectively. The latter method \cite{Guo2026HighlyER} also enables efficient rank adaptivity in the moderately high-rank regime.

Despite recent advances in sweep-based low-rank SI-DSA \cite{haut2026efficient,Guo2026HighlyER}, limitations persist. \textit{First}, the cost of individual low-rank updates in SI--DSA can be further reduced. Although \cite{Guo2026HighlyER} efficiently adapts the rank, it enforces uniformly tight tolerances, unnecessarily solving early updates to high accuracy, and does not explicitly reuse angular bases from previous iterations.
\textit{Second}, SI--DSA can lose its effectiveness for multiscale problems with strong material discontinuities \cite{warsa2004krylov}. Low-rank approximation reduces the cost per iteration but does not overcome this intrinsic convergence limitation, motivating its integration with Krylov solvers.
In this paper, we address these limitations through the following two methodological developments.
\begin{enumerate}
\item 
\textbf{Adaptive accuracy control and hybrid angular sampling for low-rank SI--DSA.}
We propose to adaptively control the inner low-rank accuracy of sweep-based SI--DSA with randomized greedy angular sampling \cite{Guo2026HighlyER}, thereby avoiding unnecessarily high-rank computations during early outer iterations. To reduce computational cost further, we introduce a two-stage hybrid sampling strategy. Randomized greedy sampling adapts the rank without a prior angular basis during early iterations; near convergence, we switch to a steady-state variant of the DEIM-based DLR method \cite{haut2026efficient} to reuse the angular basis from the preceding iteration once it becomes sufficiently representative.
    
    \item \textbf{Inexact generalized minimal residual method (GMRES)-DSA for scalar flux with low-rank matrix--vector products.} To address efficiency loss of SI-DSA for multiscale problems with strong discontinuity, we
    develop an inexact GMRES--DSA solving the integral  reformulation for scalar-flux, using sweep-based low-rank approximations of the right-hand side and matrix--vector products. Crucially, our method confines low-rank compression to the right-hand side
    and matrix--vector products. Unlike classical low-rank Krylov methods
    \cite{kressner2011low,dolgov2013tt}, it does not truncate the Krylov basis and therefore retains standard Arnoldi orthogonality in exact arithmetic. This allows the residual-gap analysis for inexact GMRES \cite{simoncini2003theory} relying on this orthogonality to be extended to flexible GMRES with an inexact right-hand side, which in turn guides the adaptive accuracy control of the low-rank approximations.
\end{enumerate}

The remainder of the paper is organized as follows. Sec.~\ref{sec:background} reviews the discretization, the classical SI--DSA and GMRES--DSA solvers, and motivates the proposed developments. Sec.~\ref{sec:sweep-lr-si-dsa} reviews the core ideas of sweep-based low-rank SI-DSA in \cite{haut2026efficient,Guo2026HighlyER} and extends the DEIM-based approach in \cite{haut2026efficient} from implicit time integration to steady-state iterations. Sec.~\ref{sec:hybrid-lr-si} introduces adaptive accuracy control and a two-stage hybrid sampling strategy for SI-DSA. Sec.~\ref{sec:ta-lr-fgmres} integrates the sweep-based low-rank solver into GMRES--DSA and provides theoretical guidelines for adaptively controlling the accuracy of low-rank operations. Sec.~\ref{sec:numerical} reports numerical results and Sec.~\ref{sec:conclusion} draws conclusions.

\section{Background and motivation}\label{sec:background}
In this paper, we consider the steady-state, single energy group, linear RTE with isotropic scattering and source:
\begin{subequations}\label{eq:rte-model}
\begin{align}
&\BOmega\cdot\nabla_{\bx}
\psi(\bx,\BOmega)
+
\sigma_t(\bx)
\psi(\bx,\BOmega)
=
\sigma_s(\bx)\phi(\bx)
+
G(\bx),
(\bx,\BOmega)
\in\Gamma_{\bx}\times\mathbb{S}^2,
\label{eq:rte-model-a}
\\
&\phi(\bx)
=
\frac{1}{4\pi}
\int_{\mathbb{S}^2}
\psi(\bx,\BOmega)
\,\mathrm{d}\BOmega,\quad
\bx\in\Gamma_{\bx},
\label{eq:rte-model-b}
\\
&\psi(\bx,\BOmega)
=
g(\bx),\quad
\bx\in\partial\Gamma_{\bx},\;\BOmega\cdot\bn(\bx)<0.
\label{eq:inflow-bc}
\end{align}
\end{subequations}
Here, $\psi(\bx,\BOmega)$ is the angular flux at spatial location $\bx\in\Gamma_{\bx}$ and angular direction $\BOmega=(\Omega_{x_1},\Omega_{x_2},\Omega_{x_3})\in\mathbb{S}^2$, while $\phi(\bx)$ is the scalar flux. The scattering, absorption, and total cross sections are denoted by $\sigma_s(\bx)\geq0$, $\sigma_a(\bx)\geq0$, and $\sigma_t(\bx)=\sigma_s(\bx)+\sigma_a(\bx)$. An isotropic inflow boundary condition is posed in \eqref{eq:inflow-bc}, with $\bn(\bx)$ denoting the outward normal at $\bx\in\partial\Gamma_{\bx}$.
When the scattering effect goes to infinity, RTE formally converges to its diffusion limit: as $\sigma_s\rightarrow\infty$, $\psi(\bx,\BOmega)
\to\phi(\bx)$, and $\phi(\bx)$ satisfies
\begin{equation}\label{eq:formal-diffusion-limit}
    -\nabla\cdot
    \Big(
        \frac{1}{3\sigma_t(\bx)}
        \nabla\phi(\bx)
    \Big)
    +
    \sigma_a\phi(\bx)
    =
    G(\bx).
\end{equation}
This section briefly reviews numerical discretization and full-rank and low-rank iterative solvers for steady-state RTE, and motivates further enhancement of low-rank solvers through adaptive accuracy control and Krylov method.

\subsection{Discretization}
\label{sec:discretization}
We apply discrete ordinates ($S_N$) angular discretization \cite{pomraning2005equations}  and the upwind discontinuous Galerkin (DG) spatial discretization \cite{adams2001discontinuous}, which is proved to be asymptotic preserving \cite{guermond2010asymptotic} with linear or higher order polynomials.

\textbf{$S_N$ angular discretization.} We solve RTE at  quadrature points $\{\BOmega_j\}_{j=1}^{N_\Omega}$ associated with normalized quadrature weights $\{\omega_j\}_{j=1}^{N_\Omega}$ satisfying $\sum_{j=1}^{N_\Omega}\omega_j=1$: 
\begin{subequations}
\label{eq:sn-system}
\begin{align}
&\BOmega_j\cdot\nabla\psi_j
    +\sigma_t\psi_j
    =
    \sigma_s\phi+G,
    \qquad
    1\leq j\leq N_\Omega,
    \label{eq:sn-system-a}
    \\
    &\phi
    =
\sum_{j=1}^{N_\Omega}\omega_j\psi_j,
\quad
    \psi_j(\bx)|_{\bx\in\partial\Gamma_{\bx}}
    =
    g_j,\;\text{if}\;\; \BOmega_j\cdot\bn(\bx)<0.
    \label{eq:sn-system-c}
\end{align}
\end{subequations}
Here, $\psi(\bx,\BOmega_j)\approx\psi_j(\bx)$. In this paper, we use Gauss-Chebyshev quadrature rule detailed in Appx. \ref{app:quadrature}.

\textbf{Discrete $S_N$ system.} The details of the upwind DG spatial discretization are omitted here; the interested reader is referred to \cite{adams2001discontinuous,Guo2026HighlyER}.  Applying this scheme to the $S_N$ equations \eqref{eq:sn-system} yields the following discrete system:
\begin{equation}
\label{eq:fully-discrete-system}
    \left(
        \BD_j+\BSigma_t
    \right)
    \bpsi_j
    =
    \BSigma_s\bphi
    +
    \bG
    +
    \bg^{\mathrm{bc}}_j,
    \;
    1\leq j\leq N_\Omega,\quad\text{where}\;     \bphi
    =
    \sum_{j=1}^{N_\Omega}
    \omega_j\bpsi_j.
\end{equation}
The vectors $\bpsi_j\in\mathbb{R}^{N_{\bx}}$ and $\bphi\in\mathbb{R}^{N_{\bx}}$ collect spatial degrees of freedom (DOFs) for the angular flux $\psi_j(\bx)$ and scalar flux $\phi(\bx)$. The matrices $\BD_j,\BSigma_t,\BSigma_s\in\mathbb{R}^{N_{\bx}\times N_{\bx}}$ discretize $\BOmega_j\cdot\nabla$, the total cross section, and the scattering cross section, respectively. The vectors $\bG\in\mathbb{R}^{N_{\bx}}$ and $\bg^{\mathrm{bc}}_j\in\mathbb{R}^{N_{\bx}}$ represent the source and inflow-boundary contributions. Throughout the paper we collect the angular quadrature weights in the vector
$\bw=(\omega_1,\ldots,\omega_{N_\Omega})^T$ and in the diagonal matrix
$\BD_\Omega=\operatorname{diag}(\bw)$. Writing
$\BPsi=[\,\bpsi_1,\ldots,\bpsi_{N_\Omega}]\in\mathbb{R}^{N_{\bx}\times N_\Omega}$
for the angular-flux matrix, \eqref{eq:fully-discrete-system} leads to
$\bphi=\BPsi\bw$.

For simplicity, we focus on 2D $X$-$Y$ geometry and Cartesian grid. In this case, the discrete transport operator $\BD_j$ can be simplified as
\begin{equation}
    \BD_j=\sum_{l=1}^{d} \Omega_{j,x_l}\, \BD_{x_l}^{\operatorname{sign}(\Omega_{j,x_l})},
\end{equation}
where $d=2$ is the spatial dimension and $\BD_{x_l}^{\pm}$ are the left- and right-going upwind discretizations of $\partial_{x_l}$. Details of $\BD_j$, $\BSigma_t$, $\BSigma_s$ can be found in the Appx. B of \cite{Guo2026HighlyER}.

Tab.~\ref{tab:notation} in Appx. \ref{sec:notation} collects the notation used throughout the paper. We
write Latin vectors and matrices in upright bold ($\bx$, $\BA$) and their Greek
counterparts in bold italic ($\bphi$, $\BSigma_s$). Discrete quantities are
distinguished from their continuous counterparts by this bold face, e.g.
$\phi(\bx)$ versus $\bphi$.

\subsection{Source iteration, diffusion synthetic acceleration, transport sweeps and Krylov extension}
\label{sec:si-dsa}
Following \cite{Adams2002FastIM,larsen2009advances}, we briefly review source iteration with diffusion synthetic acceleration (SI-DSA) and transport sweeps to solve the discrete $S_N$ system \eqref{eq:fully-discrete-system}.
Each iteration of SI-DSA consists of the following two steps. 
\begin{enumerate}
    \item \textbf{Source iteration (SI).}
    Given the scalar flux from the previous iteration (or an initial guess), we update the
    angular flux by solving the SI equation 
    \begin{equation}
    \label{eq:si-angular-update}
        \left(
            \BD_j+\BSigma_t
        \right)
        \bpsi^{(n)}_j
        =
        \BSigma_s
        \bphi^{(n-1)}
        +
        \bG
        +
        \bg^{\mathrm{bc}}_j,\;
        j=1,\ldots,N_\Omega,
    \end{equation}
    for each angle, and update the scalar flux as $\bphi^{(n-\half)}=\sum_{j=1}^{N_\Omega}\omega_j\bpsi_j^{(n)}.$
    
    \item \textbf{Diffusion synthetic acceleration (DSA).} To accelerate convergence, we correct the scalar flux as
$\bphi^{(n)}=\bphi^{(n-\half)}+\delta\bphi^{(n)}$, where
 $\delta\bphi^{(n)}$ is obtained by solving a discretized diffusion limit of the error equation:
\begin{equation}
\label{eq:dsa-discrete-correction}
\BA_{\mathrm{diff}}\delta\bphi^{(n)}
=
\BSigma_s
\left(
    \bphi^{(n-\half)}
    -
    \bphi^{(n-1)}
\right),
\end{equation}
where $\BA_{\mathrm{diff}}\in\mathbb{R}^{N_{\bx}\times N_{\bx}}$ arises from a consistent discretization of 
$-\nabla\cdot\big(\frac{1}{3\sigma_t}\nabla\big)
+\sigma_a$ \cite{adams1992diffusion,Adams2002FastIM}.
We note that DSA serves as a crucial preconditioning step, and without it, SI can converge arbitrarily slow \cite{Adams2002FastIM,larsen2009advances}.
\end{enumerate}

\textbf{Transport sweeps.} The streaming-plus-collision equation in the SI step \eqref{eq:si-angular-update} is often solved by transport sweeps \cite{Adams2002FastIM,larsen2009advances}. With an upwind spatial discretization, the discrete operator $\BD_j+\BSigma_t$ becomes block lower triangular if the spatial elements are reordered along the upwind direction corresponding to $\BOmega_j$. Hence, equation \eqref{eq:si-angular-update} can be solved with a transport sweep, i.e. one iteration of matrix-free block Gauss-Seidel sweeping along the upwind direction. We refer to Sec. 2.2 
of \cite{Guo2026HighlyER} for more details.

\textbf{Sweep-based GMRES for integral scalar flux reformulation.}
When extending SI-DSA to GMRES in \cite{warsa2004krylov}, an integral scalar flux reformulation is applied. This reformulation is derived by inverting the discrete transport operator $\BD_j+\BSigma_t$ for all angles on both sides of \eqref{eq:fully-discrete-system} and then numerically integrating the angular flux:
\begin{align}
    &\bphi=\sum_{j=1}^{N_\Omega}\omega_j(\BD_j+\BSigma_t)^{-1}(\BSigma_s\bphi+\bG+\bg_j^{\mathrm{bc}})\\
    \Leftrightarrow&
    \left(\BI-\Big(\sum_{j=1}^{N_\Omega}\omega_j(\BD_j+\BSigma_t)^{-1}\Big)\BSigma_s\right)\bphi = \sum_{j=1}^{N_\Omega}\omega_j(\BD_j+\BSigma_t)^{-1}(\bG+\bg_j^{\mathrm{bc}}).
    \label{eq:memory-efficient-reformulation-long}
\end{align}
For simplicity, we define $\BT=\sum_{j=1}^{N_\Omega}\omega_j(\BD_j+\BSigma_t)^{-1}$ and $\bb=\sum_{j=1}^{N_\Omega}\omega_j(\BD_j+\BSigma_t)^{-1}(\bG+\bg_j^{\mathrm{bc}})$, then equation \eqref{eq:memory-efficient-reformulation-long} can be rewritten as 
\begin{equation}
   (\BI-\BT\BSigma_s)\bphi=\bb.
   \label{eq:memory-efficient-reformulation}
\end{equation}
In practice, the matrix-vector product determined by $\BT\in\mathbb{R}^{N_{\bx}\times N_{\bx}}$ is implemented by transport sweeps and numerical integration with costs of $O(N_{\bx}N_\Omega)$. 

 In \cite{warsa2004krylov}, SI-DSA is rewritten as a left-preconditioned Richardson iteration:
\begin{equation}
\bphi^{(n)}=\bphi^{(n-1)}+(\BI+\BA_{\mathrm{diff}}^{-1}\BSigma_s)\br^{(n-1)},
\label{eq:si-richardson}
\end{equation}
where the residual $\br^{(n-1)}=\bb-(\BI-\BT\BSigma_s)\bphi^{(n-1)}=\bphi^{(n-\half)}-\bphi^{(n-1)}$. When extending to the Krylov framework, \cite{warsa2004krylov}
directly uses GMRES to solve \eqref{eq:memory-efficient-reformulation} with the DSA preconditioner, $\BM_{\mathrm{DSA}}$,  defined through its action
\begin{equation} \BM_{\mathrm{DSA}}^{-1}:=\BI+\BA_{\mathrm{diff}}^{-1}\BSigma_s.
    \label{eq:dsa-preconditioner}
\end{equation}



\subsection{Low-rank iterative solvers for RTE\label{sec:lr-background}}
To mitigate the substantial storage and computational costs due to the high dimensionality of RTE, the angular flux can be approximated by a low-rank matrix factorization:
\begin{equation}
\BPsi   =\begin{bmatrix}
            \bpsi_1 & \dots & \bpsi_{N_\Omega}
            \end{bmatrix} 
\approx \BX\BS\BV^T,
\end{equation}
where $\BPsi\in\mathbb{R}^{N_{\bx}\times N_\Omega}$ is the matrix representation of the full-rank angular flux, $\BX\in\mathbb{R}^{N_{\bx}\times r}$ denotes the low-rank spatial basis, $\BV\in\mathbb{R}^{N_\Omega\times r}$ denotes the low-rank angular basis, and $\BS\in\mathbb{R}^{r\times r}$ denotes the low-rank core, and $r$ is the rank. Higher order tensor decompositions can also be applied with further decompositions in the physical or angular space. The low-rank solutions can be obtained by dynamical low-rank (DLR) \cite{koch2007dynamical} or step-and-truncation/rank-adaptive methods \cite{bachmayr2017iterative}.

For steady-state solves or implicit time marching of RTE, low-rank iterative solvers have been designed based on DLR  in \cite{peng2023sweep}, preconditioned low-rank Richardson iterations for the second-order reformulation of RTE \cite{bachmayr2026low,guo2026inexact}, spatial projection and angular sampling inside or outside SI \cite{Guo2026HighlyER,haut2026efficient}.
In particular, based on angular sampling through a greedy randomized approach inside SI \cite{Guo2026HighlyER} or discrete empirical interpolation method (DEIM) across time steps \cite{haut2026efficient}, \cite{haut2026efficient,Guo2026HighlyER} are able to reuse existing transport sweeps implementations non-intrusively.

\subsection{Motivations\label{sec:motivation}}
\begin{figure}[H]
    \centering
    \includegraphics[width=0.45\textwidth]{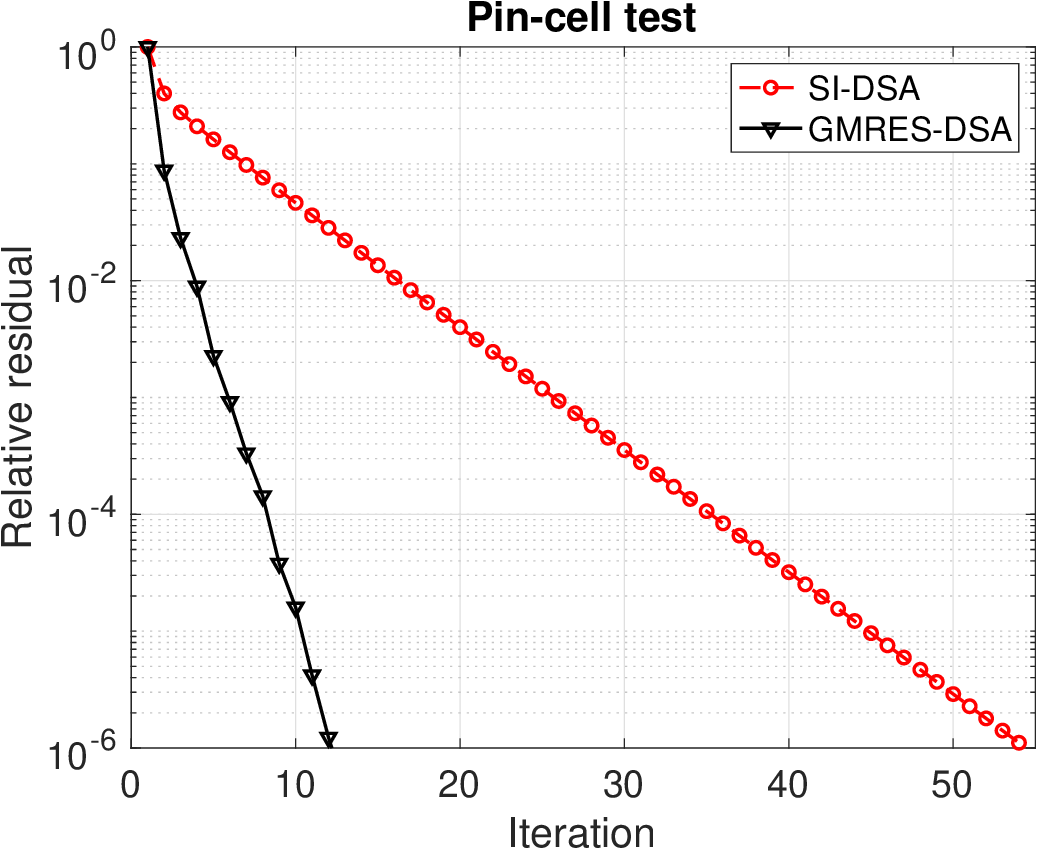}
    \caption{ Convergence history of the full-rank SI and GMRES. A partially consistent DSA preconditioner is applied following \cite{Adams2002FastIM}.}
    \label{fig:adaptive-accuracy}
\end{figure}
Despite recent developments of sweep-based low-rank SI-DSA  \cite{haut2026efficient,Guo2026HighlyER}, limitations remain.

\textit{First}, unlike low-rank iterative solvers with adaptive accuracy control \cite{bachmayr2017iterative,bachmayr2026low,appelo2025lraa}, the method in \cite{Guo2026HighlyER} enforces uniformly tight tolerances throughout source iterations. Consequently, early SI updates are unnecessarily solved to high accuracy while the iterates remain far from convergence. Moreover, though its randomized greedy angular sampling adapts the rank without a prior angular basis, it does not explicitly reuse bases from preceding iterations. Such reuse offers an additional opportunity for computational savings near convergence. The DEIM-based approach \cite{haut2026efficient} exploits the angular basis from the previous time step for angular sampling. In the steady-state setting, however, no such basis is available at initialization, and the basis from preceding iterations may not be sufficiently representative when the angular subspace is still changing substantially.

\textit{Second}, reducing per-iteration costs does not overcome the intrinsic convergence limitations of SI--DSA. Fourier analysis and numerical tests in \cite{warsa2004krylov} show that SI--DSA can lose effectiveness in multidimensional problems with highly diffusive regions and strong material discontinuities. Low-rank SI--DSA inherits this limitation. As presented in Fig. \ref{fig:adaptive-accuracy}, DSA-preconditioned GMRES for the integral scalar-flux reformulation \eqref{eq:memory-efficient-reformulation} can mitigate such effectiveness reduction \cite{warsa2004krylov}.

We address these limitations through two complementary developments.

\textbf{Adaptive accuracy control and hybrid angular sampling for low-rank SI--DSA.}
Guided by convergence analysis of inexact Richardson iteration, we adaptively control the accuracy of each low-rank SI solve, allowing cheaper, lower-rank updates during early outer iterations and tightening the tolerances as convergence proceeds. For further acceleration, we introduce a two-stage hybrid sampling strategy. Randomized greedy sampling builds the low-rank approximation during early iterations without requiring a prior angular basis. Near convergence, we switch to a steady-state variant of the DEIM-based method to reuse the preceding angular basis once it becomes sufficiently representative.

\textbf{GMRES-DSA solving the integral  reformulation for scalar flux through low-rank matrix-vector products.} To address the convergence limitations of SI--DSA, we develop an inexact GMRES--DSA solver for the integral scalar-flux reformulation. Specifically, we use sweep-based low-rank transport solves to construct the right-hand side and perform matrix--vector products. By confining low-rank approximations to these operations, we avoid compressing the scalar-flux Krylov basis and thus preserve standard Arnoldi orthogonality in exact arithmetic. Consequently, the residual-gap analysis for inexact GMRES \cite{simoncini2003theory}, which relies on this orthogonality, can be extended to our setting, guiding adaptive accuracy control of low-rank approximations. In contrast, directly applying classical low-rank Krylov methods to an angular-flux formulation \cite{kressner2011low,dolgov2013tt} introduces truncations during the construction of the Krylov basis, potentially compromising orthogonality unless carefully designed truncation and reorthogonalization procedures are employed \cite{palitta2021convergence}.

\section{Sweep-based low-rank SI-DSA}
\label{sec:sweep-lr-si-dsa}
In this section, we briefly review the sweep-based low-rank SI-DSA via angular sampling based on a randomized greedy approach \cite{Guo2026HighlyER} and DEIM \cite{haut2026efficient}. Though both approaches rely on angular sampling, they differ in how the angular directions are selected and consequently offer complementary strengths. Here, we first review the randomized greedy approach in Sec. \ref{sec:rss-review}, and then extends the DEIM-based approach from implicit time marching to subsequent steady-state iterations in Sec. \ref{sec:deim-lr-si}.

\subsection{Sweep-based rank-adaptive SI via randomized greedy angle sampling\label{sec:rss-review}}
To approximate the angular flux in low-rank form
$\BPsi\approx\BX\BS\BV^{T}$ with $\BX\in\mathbb{R}^{N_{\bx}\times r}$,
$\BS\in\mathbb{R}^{r\times r}$, $\BV\in\mathbb{R}^{N_\Omega\times r}$, \cite{Guo2026HighlyER} introduces an inner-loop iteration that selects representative angular directions to construct a low-rank spatial basis via a randomized greedy sampling inside each SI step. 

Each inner-loop iteration $i$ consists of three steps:
(i)~\emph{transport sweeps} for a small batch of $p$  sampled angles, using standard one-angle sweep implementations to compute
$\bpsi_{s_1^{(i)}},\ldots,\bpsi_{s_p^{(i)}}$;
(ii)~an \emph{incremental rank-$p$ augmentation} of the spatial basis for
$\operatorname{span}
[\,\BX^{(n,i-1)},\bpsi_{s_1^{(i)}},\ldots,\bpsi_{s_p^{(i)}}\,]
$
using two-pass modified Gram--Schmidt with adaptive reorthogonalization
\cite{daniel1976reorthogonalization}. This preserves the nested structure
$\BX^{(n,i)}=[\,\BX^{(n,i-1)},\BP^{(n,i)}\,]$ and enables incremental updates
of the Galerkin-projected operators; and
(iii)~\emph{residual-based greedy random sampling}, in which $q>p$
 unsampled candidate angles are randomly drawn and their reduced
approximations and residuals are computed by Galerkin projection of their SI equations \eqref{eq:si-angular-update} onto
$\operatorname{span}(\BX^{(n,i)})$.

The inner loop terminates if both the largest candidate residual and the
change in scalar flux between consecutive inner iterates fall below prescribed
thresholds. Otherwise, $p$ candidates with the largest residuals are
selected for the next augmentation. No intermediate truncation is performed
during the inner loop. 

This strategy balances sampling quality and cost, enabling efficient
rank adaptation without prior information. It constructs a reduced space only
slightly larger than the effective rank, thereby avoiding the
rank-proportional over-augmentation of standard SAT and BUG methods. The full
algorithm is summarized in Alg.~\ref{alg:lr-si}.

\begin{algorithm}[h]
\caption{Rank-adaptive low-rank SI (LR-SI) based on randomized greedy sampling.\label{alg:lr-si}}
\begin{algorithmic}[1]
\STATE{\textbf{Input}: Scalar flux $\bphi^{(n-1)}$ from the previous outer
iteration, tolerances $\varepsilon_{\mathrm{res}},\varepsilon_{\mathrm{diff}},
\varepsilon_{\mathrm{SVD}}$, batch size $p$, candidate-set size $q>p$, and
maximum number of inner iterations $i_{\max}$.}
\STATE{\textbf{Initialization}: Randomly sample $p$ initial angles
$\{\BOmega_{s_1^{(1)}},\dots,\BOmega_{s_p^{(1)}}\}$; set
$\BX^{(n,0)}:=[]$ and $\bphi^{(n)}_{\mathrm{old}}:=\bphi^{(n-1)}$.}
\FOR{$i=1,2,\dots,i_{\max}$}
    \STATE{\textbf{Transport sweeps}: Solve
    $(\BD_j+\BSigma_t)\bpsi_j^{(n)}=\BSigma_s\bphi^{(n-1)}+\bG+\bg_j^{\mathrm{bc}}$
    for $j=s_1^{(i)},\dots,s_p^{(i)}$.}
    \STATE{\textbf{Incremental basis update}: 
    $\BX^{(n,i)}=
    \operatorname{orth}[\BX^{(n,i-1)},\bpsi_{s_1^{(i)}},\dots,\bpsi_{s_p^{(i)}}]$
    via MGS2 with adaptive reorthogonalization; update the Galerkin-projected
    operators incrementally exploiting
    $\BX^{(n,i)}=[\,\BX^{(n,i-1)},\BP^{(n,i)}\,]$.}
    \STATE{\textbf{Greedy random subsampling}: Randomly draw $q$ previously
    unsampled angles to form $\mathcal{S}^{(i)}$; for each
    $\BOmega_j\in\mathcal{S}^{(i)}$, solve the reduced problem
    \begin{equation}
(\BX^{(n,i)})^T(\BD_j+\BSigma_t)\BX^{(n,i)}\bc_j^{(n,i)}
=
(\BX^{(n,i)})^T(\BSigma_s\bphi^{(n-1)}+\bG+\bg^{\mathrm{bc}}_j)
\label{eq:galerkin-projection}
    \end{equation}
     and evaluate the residual $\br_j^{(i)}$.}
    \IF{$\max_{\BOmega_j\in\mathcal{S}^{(i)}}\|\br_j^{(i)}\|_2
        <\varepsilon_{\mathrm{res}}$}
        \STATE{Solve \eqref{eq:galerkin-projection} for all remaining
        unsampled angles to form $\BC^{(n,i)}=[\bc_1^{(n,i)},\dots,\bc_{N_\Omega}^{(n,i)}]$, and compute
        $\bphi^{(n,i)}=\BX^{(n,i)}\bigl(\BC^{(n,i)}\bw\bigr)$.}
        \IF{$\|\bphi^{(n,i)}-\bphi^{(n)}_{\mathrm{old}}\|_2
            \le\varepsilon_{\mathrm{diff}}$}
            \STATE{Set $\bphi^{(n,*)}:=\bphi^{(n,i)}$; \textbf{break}.}
        \ELSE
            \STATE{Set $\bphi^{(n)}_{\mathrm{old}}:=\bphi^{(n,i)}$.}
        \ENDIF
    \ENDIF
    \STATE{\textbf{Angle selection}: Greedily select the $p$ angles in
    $\mathcal{S}^{(i)}$ with the largest residuals as
    $\{\BOmega_{s_1^{(i+1)}},\dots,\BOmega_{s_p^{(i+1)}}\}$.}
\ENDFOR
\STATE{\textbf{Optional truncation}: If the low-rank angular flux is
required, compute the truncated SVD
$\BC^{(n,i)}\approx\BU^{(n)}\BS^{(n)}(\BV^{(n)})^{T}$ with tolerance
$\varepsilon_{\mathrm{SVD}}$ and set
$\BX^{(n)}:=\BX^{(n,i)}\BU^{(n)}$.}
\STATE{\textbf{Output}: Scalar flux $\bphi^{(n,*)}$ and, optionally, the
rank-$r_n$ factorization $\BX^{(n)}\BS^{(n)}(\BV^{(n)})^{T}$.}
\end{algorithmic}
\end{algorithm}

\subsection{DEIM-based low-rank SI\label{sec:deim-lr-si}}

We next adapt the DEIM-based sweep-based SI  in
\cite{haut2026efficient} for implicit time marching to the steady transport equation  
by viewing successive outer-loop steady-state iterations as pseudo time-marching.

At the $n$-th iteration, we start from the low-rank factors $\BPsi^{(n)}
\approx
\BX^{(n)}\BS^{(n)}(\BV^{(n)})^T$ with 
$\BX^{(n)}\in\mathbb{R}^{N_{\bx}\times r}$, $\BS^{(n)}\in\mathbb{R}^{r\times r}$, 
$\BV^{(n)}\in\mathbb{R}^{N_\Omega\times r}.$
One iteration proceeds through the following four steps.

\textbf{(i) DEIM-based angle sampling.}
We first apply DEIM in 
 Appx.~\ref{app:deim-sampling-alg} to the  angular basis $\BV^{(n)}$ to select
$r$ representative angles, $\BOmega_{p_1},\ldots,\BOmega_{p_r}$, and define the index set for sampled angles as $\mathcal{I}=\{p_1,\dots,p_r\}$.

\textbf{(ii) Sweep-based spatial basis update.}
With $\bphi^{(n+1,0)}=\bphi^{(n)}$ as the initial guess, we perform an inner-loop SI-DSA solve
using transport sweeps only along the DEIM-selected directions. Using MATLAB indexing notation, the $\ell$-th inner SI iteration solves
\begin{subequations}
\begin{align}
&(\BD_{p_j}+\BSigma_t)
\bpsi_{p_j}^{(n+1,\ell)}
=
\BSigma_s\bphi^{(n+1,\ell-1)}
+\bG+\bg_{p_j}^{\mathrm{bc}},
\qquad j=1,\ldots,r,
\\
&\bphi^{(n+1,\ell-\half)}
=
\big[
\bpsi_{p_1}^{(n+1,\ell)},\ldots,\bpsi_{p_r}^{(n+1,\ell)}
\big]
\Big(\BV^{(n)}(\mathcal{I},:)\Big)^{-T}
(\BV^{(n)})^T\bw.
\end{align}
\label{eq:deim_column_sweep}
\end{subequations}
Here $\bw=(\omega_1,\ldots,\omega_{N_\Omega})^T$ collects angular
quadrature weights. The reconstructed scalar flux
$\bphi^{(n+1,\ell-\half)}$ is then used in the standard DSA correction.
Upon the inner-loop convergence, the updated spatial basis is obtained by
$\BX^{(n+1)}
=
\operatorname{orth}_x(\BPsi_{\mathrm{c}}^\star),$
where $\BPsi_{\mathrm{c}}^\star$ collects the sampled angular fluxes at the final inner iteration.
When 
\begin{equation}
\big\|
\bphi^{(n+1,\ell)}
-
\bphi^{(n+1,\ell-1)}
\big\|_{\infty}
/
\big\|
\bphi^{(n+1,\ell)}
\big\|_{\infty}
\leq \varepsilon_{\mathrm{DEIM-SI}}, \label{eq:inner-deim-stop}
\end{equation}
we stop the inner-loop iterations and set $\bphi^{(n+1)}=\bphi^{(n+1,\ell)}$.

\textbf{(iii) Projection-based angular basis update.}
With $\BX^{(n+1)}$ fixed, we project the steady transport equation onto the
updated spatial space. For each angle $\BOmega_j$, the reduced
angular coefficient $\BL_j\in\mathbb{R}^r$ satisfies
\begin{subequations}
\begin{align}
&\BB_j\BL_j
-
\BM_s
\overline{\BL}
=
(\BX^{(n+1)})^T
(\bG+\bg_j^{\mathrm{bc}})=\bff_j,
\\
\text{where}\;&\BB_j
=
(\BX^{(n+1)})^T(\BD_j+\BSigma_t)\BX^{(n+1)},
\quad
\BM_s
=
(\BX^{(n+1)})^T\BSigma_s\BX^{(n+1)},\\
\text{and}\;&\overline{\BL}
=
\sum_{j=1}^{N_\Omega}
\omega_j\BL_j=
\Big(
\BI
-
\sum_{j=1}^{N_\Omega}
\omega_j \BB_j^{-1}\BM_s
\Big)^{-1}
\Big(
\sum_{j=1}^{N_\Omega}
\omega_j \BB_j^{-1}\bff_j
\Big).\label{eq:reduced-integral-rte}
\end{align}
\label{eq:deim_row_equation}
\end{subequations}
In practice, we first solve \eqref{eq:reduced-integral-rte} to obtain
$\overline{\BL}$.
With $\overline{\BL}$ available , the coefficients $\BL_j$ can be
 independently solved for each angle.
Collecting $\BL_j^T$ as the rows of $\BL$, a quadrature-weighted
orthogonalization yields
\begin{equation}
\BL
=
\BV^{(n+1)}(\BS^{(n+1)})^T,
\qquad
(\BV^{(n+1)})^T
\pzc{\textrm{diag}(\omega_1,\dots,\omega_{N_{\BOmega}}})
\BV^{(n+1)}
=
\BI,
\label{eq:deim_weighted_orth}
\end{equation}
This gives the updated factors $\BX^{(n+1)},\BS^{(n+1)},\BV^{(n+1)}$. Equation~\eqref{eq:deim_weighted_orth} produces a quadrature-weighted orthonormal angular basis, whereas the optional truncation in Alg.~\ref{alg:lr-si} uses Euclidean orthonormality. This distinction leaves the basis-invariant reconstruction \eqref{eq:deim_column_sweep} unchanged but may alter the angles selected by DEIM.

\textbf{(iv) Stopping criteria.} The outer iterations stop when $\|\bphi^{(n)}-\bphi^{(n-1)}\|_\infty\leq \varepsilon_{\phi}$.

The DEIM-based method selects representative angles from the previous iterate’s angular basis $\BV^{(n)}$, avoiding additional residual-based sampling. This procedure reuses angular information across iterations but requires access to $\BV^{(n)}$ and relies on its ability to capture the next iterate’s angular structure.

\section{Adaptive accuracy control and hybrid angular sampling for low-rank SI-DSA}
\label{sec:hybrid-lr-si}
We first introduce adaptive accuracy control for the sweep-based SI based on randomized greedy sampling in Sec. \ref{sec:rss-review}, then combine it with the DEIM-based approach in Sec. \ref{sec:deim-lr-si}. 

\subsection{Adaptive accuracy control}
\label{sec:adaptive-relaxation}
As shown in Fig.~\ref{fig:adaptive-accuracy}, early iterates remain far from convergence. Consequently, the accuracy requirements for low-rank SI can be relaxed during early iterations to reduce computational cost.

\textbf{Adaptive accuracy control}. We introduce a relaxation factor  $\tau_n$ to determine the inner-loop thresholds for Alg. \ref{alg:lr-si} in the $n$-th outer-loop iteration. Define the smallest and the largest relaxation factor allowed as $\tau_{\min}$ and $\tau_{\max}$. Initializing $\tau_1=\tau_{\max}$,  $\tau_n$ is adaptively updated as follows:
\begin{enumerate}
    \item\textbf{Inner-loop low-rank SI and DSA correction.} Perform inner-loop rank-adaptive SI in Alg. \ref{alg:lr-si} with the inner-loop thresholds $\varepsilon_{\mathrm{res}}^{(n)}=\varepsilon_{\mathrm{diff}}^{(n)}=\tau_n$ to update the scalar flux $\bphi^{(n-\half)}$ defined below equation \eqref{eq:si-angular-update}. Then, we apply the DSA correction $\bphi^{(n)}=\bphi^{(n-\half)}+\delta\bphi^{(n)}$.

    \item \textbf{Update the relaxation factor.} Compute $R_n=\|\bphi^{(n)}-\bphi^{(n-1)}\|_\infty$, then update the relaxation factor as
    \begin{equation}
        \tau_{n+1}=\begin{cases}
            \tau_{\max}=\tau_1,\;\text{if}\; n=1,\\
            \max\{\tau_{\min},\min(\tau_{\max},\tau_nR_n/R_{n-1})\},\;\text{if}\;n\geq2.
        \end{cases}
        \label{eq:si-relaxation-update}
    \end{equation}
\end{enumerate} 

\textbf{Rationale.}
Interpreting low-rank SI-DSA as a preconditioned Richardson iteration, our residual-dependent inner accuracy relaxation follows the analysis of inexact Richardson iterations. 
As shown in \cite{kressner2011low,birken2015termination}, residual-relative perturbations can preserve convergence. Viewing the error due to low-rank compression as a residual perturbation, the following estimate makes this principle explicit.
\begin{theorem}[Inexact preconditioned Richardson estimate]
\label{thm:inexact-richardson}
Consider the inexact Richardson iteration for solving
$\BA\bx=\bb$, i.e.
\[
    \bx^{(n+1)}
    =
    \bx^{(n)}
    +
    \alpha\BM^{-1}
    \left(
        \widetilde{\bb}^{(n)}
        -
        \widetilde{\BA}^{(n)}
        \bx^{(n)}
    \right),
\]
where $\BM$ is a symmetric positive definite (SPD) preconditioner, $\alpha>0$ is a damping parameter, and the $\BM$-norm contraction factor $\rho$ satisfies
   $ \rho
    :=
    \left\|
        \BI-\alpha\BM^{-1}\BA
    \right\|_{\BM}
    <1.$
Define the exact residual, inexact residual, and residual perturbation by
\begin{equation}
    \br^{(n)}
    :=
    \bb-\BA\bx^{(n)},
    \qquad
    \widetilde{\br}^{(n)}
    :=
    \widetilde{\bb}^{(n)}
    -
    \widetilde{\BA}^{(n)}\bx^{(n)},
\end{equation}
If the relative residual perturbation satisfies
\begin{equation}
   \|
        \widetilde{\br}^{(n)}-\br^{(n)}
    \|_{\BM^{-1}}/\|\br^{(n)}\|_{\BM^{-1}}
    \leq
    \eta^{(n)},
    \label{eq:relative-perturbation-condition}
\end{equation}
then the error at the $(n+1)$st iteration satisfies
\begin{equation}
\label{eq:inexact-richardson-one-step}
    \left\|
        \bx^{(n+1)}-\bx^*
    \right\|_{\BM}
    \leq
    \left(
        \rho+\alpha K\eta^{(n)}
    \right)
    \left\|
        \bx^{(n)}-\bx^*
    \right\|_{\BM},
    \qquad
    K
    :=
    \left\|
        \BM^{-1}\BA
    \right\|_{\BM},
\end{equation}
where $\bx^*$ denotes the exact solution. In particular, if
$\eta^{(n)}
    \leq
    \overline{\eta}
    =
    \frac{1-\rho}{\alpha K}$,
then the inexact iteration remains contractive.
\end{theorem}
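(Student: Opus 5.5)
The plan is a direct one-step error recursion. I split the inexact update into the exact preconditioned Richardson step plus a perturbation term, then measure both in the $\BM$-norm, defined by $\|\bv\|_{\BM}^2=\bv^T\BM\bv$.

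\textbf{Step 1: error recursion.} Let $\be^{(n)}:=\bx^{(n)}-\bx^*$. Since $\BA\bx^*=\bb$, the exact residual is $\br^{(n)}=\bb-\BA\bx^{(n)}=-\BA\be^{(n)}$. Write $\widetilde{\br}^{(n)}=\br^{(n)}+(\widetilde{\br}^{(n)}-\br^{(n)})$ and subtract $\bx^*$ from both sides of the update. This gives
\begin{equation*}
\be^{(n+1)}=\left(\BI-\alpha\BM^{-1}\BA\right)\be^{(n)}+\alpha\BM^{-1}\left(\widetilde{\br}^{(n)}-\br^{(n)}\right).
\end{equation*}
By the triangle inequality and the definition of $\rho$, the first term is bounded by $\rho\|\be^{(n)}\|_{\BM}$.

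\textbf{Step 2: norm identity.} For the second term I use the identity $\|\BM^{-1}\bv\|_{\BM}=\|\bv\|_{\BM^{-1}}$. It follows from $(\BM^{-1}\bv)^T\BM(\BM^{-1}\bv)=\bv^T\BM^{-1}\bv$, using that $\BM$ is SPD and hence $\BM^{-1}$ is symmetric. Combined with the hypothesis \eqref{eq:relative-perturbation-condition}, this gives
\begin{equation*}
\alpha\left\|\BM^{-1}\left(\widetilde{\br}^{(n)}-\br^{(n)}\right)\right\|_{\BM}\le\alpha\eta^{(n)}\|\br^{(n)}\|_{\BM^{-1}}.
\end{equation*}

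\textbf{Step 3: converting the residual to the error.} Applying the same identity to $\br^{(n)}=-\BA\be^{(n)}$ gives
\begin{equation*}
\|\br^{(n)}\|_{\BM^{-1}}=\|\BM^{-1}\BA\be^{(n)}\|_{\BM}\le K\|\be^{(n)}\|_{\BM}.
\end{equation*}
Combining Steps 1–3 yields \eqref{eq:inexact-richardson-one-step}.

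\textbf{Step 4: contractivity.} If $\eta^{(n)}<\overline{\eta}$, then $\rho+\alpha K\eta^{(n)}<1$ and the iteration is strictly contractive. At the equality $\eta^{(n)}=\overline{\eta}$ the factor equals exactly $1$, so the error is only non-increasing. I would therefore phrase the conclusion as the strict inequality $\eta^{(n)}\le\eta'<\overline{\eta}$ giving a uniform contraction factor $\rho+\alpha K\eta'<1$, and note the boundary case separately.

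The main (minor) obstacle is the choice of norms. The residual perturbation must be measured in the dual norm $\|\cdot\|_{\BM^{-1}}$ so that it pairs exactly with the $\BM$-norm on errors. The identity in Step 2 is what makes the argument close without any extra condition-number factors.
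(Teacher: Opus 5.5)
Your proposal is correct and follows the paper's proof essentially step for step. It uses the same error recursion $\be^{(n+1)}=(\BI-\alpha\BM^{-1}\BA)\be^{(n)}+\alpha\BM^{-1}(\widetilde{\br}^{(n)}-\br^{(n)})$, the same duality identity $\|\BM^{-1}\bv\|_{\BM}=\|\bv\|_{\BM^{-1}}$, and the same bound $\|\br^{(n)}\|_{\BM^{-1}}=\|\BM^{-1}\BA\be^{(n)}\|_{\BM}\le K\|\be^{(n)}\|_{\BM}$. Your caveat about the boundary case $\eta^{(n)}=\overline{\eta}$ is valid, and the paper's appendix proof makes the same fix by taking $\overline{\eta}<\frac{1-\rho}{\alpha K}$ to get a strict contraction factor.
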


\begin{proof}
See Appendix~\ref{app:proof-inexact-richardson}.
\end{proof}
To specialize Theorem~\ref{thm:inexact-richardson} to SI--DSA, we take $\BA=\BI-\BT\BSigma_s$, $\alpha=1$, and $\BM=\BM_{\mathrm{DSA}}$ from \eqref{eq:dsa-preconditioner}. {
With suitable consistent discretization, $\BM_{\mathrm{DSA}}$ is SPD.} Under the theorem's assumptions, the analysis motivates controlling the low-rank perturbation relative to the outer residual. If the inner thresholds ensure $\|\widetilde{\br}^{(n)}-\br^{(n)}\|_{\BM^{-1}}\lesssim\tau_{n+1}$, then the condition \eqref{eq:relative-perturbation-condition} motivates
\begin{equation}
\label{eq:richardson-error-control}
\tau_{n+1}\lesssim
\overline{\eta}\|\br^{(n)}\|_{\BM^{-1}}.
\end{equation}
{
For exact SI--DSA, the Richardson formulation \eqref{eq:si-richardson} implies
$\|\bphi^{(n)}-\bphi^{(n-1)}\|_{\BM}
=\|\br^{(n-1)}\|_{\BM^{-1}}$. This identity motivates using successive iterate differences to
monitor the residual.
To avoid an additional preconditioner application in norm computation, we empirically use the inexpensive indicator $R_n=\|\bphi^{(n)}-\bphi^{(n-1)}\|_\infty$ as a surrogate for the residual norm. $R_n$ differs from $\|\br^{(n-1)}\|_{\BM^{-1}}$  in two ways: the $\BM^{-1}$-norm is replaced by the cheaper $\infty$-norm, and the residual is lagged by one iteration.
In addition, since the constant $\bar \eta$ in \eqref{eq:richardson-error-control} is not directly available, we cannot enforce that sufficient condition directly. Instead, the proposed adaptive accuracy control
\eqref{eq:si-relaxation-update} scales the inner tolerance with $R_n$. If $R_1>0$ and no clipping occurs after initialization, the update gives
\[\tau_{n+1}=\frac{\tau_2}{R_1}R_n=\frac{\tau_1}{R_1}R_n,\qquad n\geq2.\]
Thus, $\tau_{n+1}/R_n$ remains fixed: the inner tolerance tracks changes in the convergence indicator. This rule mirrors the residual-relative scaling suggested by \eqref{eq:richardson-error-control}, but does not guarantee that its sufficient condition holds.
}

\begin{remark}[Existing inexact-iteration theory]
Sec.~2.2.1 of \cite{kressner2011low} shows that fixed truncation accuracy limits the attainable residual in low-rank Richardson iteration and suggests reducing the tolerance roughly in proportion to the current residual.
For inexact fixed-point iterations, \cite{birken2015termination} reports that scaling the inner stopping tolerance by the outer residual generally yields a convergent iteration.
Recently, \cite{bachmayr2026low} uses coercivity and soft-thresholding to adapt iteration accuracy in a low-rank Richardson method with an exponential-sums preconditioner for the second-order formulation of the RTE.
\end{remark}


\subsection{Hybrid angular sampling}
\label{sec:hybrid-deim-greedy}
Randomized greedy \cite{Guo2026HighlyER} and
DEIM-based \cite{haut2026efficient} angular sampling have complementary strengths. Randomized greedy sampling does not require a preceding angular
basis: it selects representative angles according to their residuals
during SI and therefore enable direct rank adaptivity. However, it
does not explicitly exploit the low-rank angular structure identified
in preceding outer iterations. In contrast, DEIM reuses the preceding angular basis. In the steady-state setting, however, no such basis is available at initialization, and substantial changes in the angular subspace during early iterations may reduce its effectiveness.



We propose exploiting the complementary strengths of both strategies through a two-stage approach. During early outer iterations, the low-rank SI with
randomized greedy sampling (Alg.~\ref{alg:lr-si}), together with the adaptive
accuracy relaxation in Sec.~\ref{sec:adaptive-relaxation}, constructs and
adapts the low-rank bases without requiring a prescribed basis or rank. As the outer iteration approaches convergence, successive low-rank solutions vary only mildly, making the preceding angular basis sufficiently representative of the next iterate. We then switch to the DEIM-based iteration in
Sec.~\ref{sec:deim-lr-si}, thereby avoiding residual evaluations of candidate angles required by the randomized greedy sampling.

We identify the switching point using the change between successive scalar-flux
iterates.  Specifically, we switch after the first iteration $n_{\mathrm{switch}}$ satisfying
\begin{equation}
\|\bphi^{(n)}-\bphi^{(n-1)}\|_{\infty}
        \leq
        \varepsilon_{\mathrm{switch}},
    \label{eq:hybrid-switch-iteration}
\end{equation}
where $\varepsilon_{\mathrm{switch}}$ is a prescribed switching tolerance. At the $(n_{\mathrm{switch}}+1)$-st iteration, 
we use the factors $\BPsi^{(n_{\mathrm{sw}})}\approx    \BX^{(n_{\mathrm{sw}})}
    \BS^{(n_{\mathrm{sw}})}
    (\BV^{(n_{\mathrm{sw}})})^T$
to initialize the DEIM-based method in
Sec.~\ref{sec:deim-lr-si}, which is then used for all subsequent outer iterations. With a suitable choice of $\varepsilon_{\mathrm{switch}}$, this
two-stage method provides additional acceleration over pure randomized
greedy sampling without sacrificing accuracy.  However, if
$\varepsilon_{\mathrm{switch}}$ is chosen too large, the method may switch
before the low-rank spaces have sufficiently stabilized, causing the
subsequent outer iterations to stagnate.

\begin{remark}[Why not DEIM-based initialization]
DEIM can provide initial samples for randomized greedy sampling in Alg. \ref{alg:lr-si}. However, we observe that this initialization provides marginal or even no practical computational savings, because it avoids only the inexpensive initial
candidate sampling and residual evaluations  while requiring transport sweeps
for the entire set of DEIM-selected angles.
\end{remark}

\section{GMRES-DSA for the integral scalar-flux  reformulation with low-rank matrix-vector products}
\label{sec:ta-lr-fgmres}
As shown in Fig.~\ref{fig:adaptive-accuracy} and
\cite{warsa2004krylov}, SI-DSA may converge slowly for multiscale problems
with highly diffusive regions and strong material discontinuities. Following
\cite{warsa2004krylov}, we address this limitation by applying
GMRES-DSA to the integral reformulation for scalar flux
\eqref{eq:memory-efficient-reformulation}. Our method uses sweep-based
low-rank transport solver in Alg. \ref{alg:lr-si} to approximate the right-hand side and matrix--vector products. By confining low-rank approximations to these operations, the method
does not truncate the scalar-flux Krylov vectors and preserves their
orthogonality, making established inexact-GMRES analysis relying on this orthogonality
\cite{simoncini2003theory} applicable.
Sec.~\ref{sec:lr-matvec} introduces the low-rank matrix--vector product, and
Sec.~\ref{sec:gmres-theory} integrates it into flexible GMRES (FGMRES) with adaptive
accuracy control.

\subsection{Low-rank matvecs and right-hand side evaluation for the scalar flux reformulation}
\label{sec:lr-matvec}
The integral reformulation for scalar flux \eqref{eq:memory-efficient-reformulation} reads
\begin{equation}
    (\BI-\BT\BSigma_s)\bphi=\bb,
\;
\BT=\sum_{j=1}^{N_\Omega}\omega_j(\BD_j+\BSigma_t)^{-1},
\;
\bb=\sum_{j=1}^{N_\Omega}\omega_j(\BD_j+\BSigma_t)^{-1}(\bG+\bg_j^{\mathrm{bc}}).
\label{eq:integral-scalar-flux-def}
\end{equation}
For a given vector $\bz$, evaluating
$(\BI-\BT\BSigma_s)\bz$ requires transport sweeps over all
angles with the scattering source $\BSigma_s\bz$, followed by angular
quadrature. In our low-rank setting, we instead employ the rank-adaptive sweep-based solver via randomized greedy sampling 
(Alg.~\ref{alg:lr-si}). Specifically, we first compute $\bs:=\BSigma_s \bz$ and pass $\bs$ to the low-rank transport solver as an isotropic  source. The
low-rank solver then adaptively samples representative angles and
constructs the approximation $\BPsi\approx\BX\BC$ with $\BPsi\in\mathbb{R}^{N_{\bx}\times N_\Omega}$, $\BX\in\mathbb{R}^{N_{\bx}\times r}$, and $\BC\in\mathbb{R}^{r\times N_\Omega}$. The associated angular integration is approximated by contraction
$\bphi=\BX(\BC\bw).$

In summary, the matrix--vector product $(\BI-\BT\BSigma_s)\bz$ is approximated by low-rank transport solves: 
\begin{equation}
    \BA_{\mathrm{lr}}(\bz;\tau)
    :=
    \bz-\widetilde{\BT}_{\mathrm{lr}}(\BSigma_s\bz;\tau),
    \label{eq:lr-matvec-operator}
\end{equation}
where $\widetilde{\BT}_{\mathrm{lr}}(\,\cdot\,;\tau)$ denotes one call to Alg.~\ref{alg:lr-si} with isotropic source $\BSigma_s\bz$ and inner-loop thresholds $\varepsilon_{\mathrm{res}}=\varepsilon_{\mathrm{diff}}=\tau$,  followed by the angular quadrature through contraction.

In the integral  formulation for scalar flux, transport sweeps over all angles are also required to construct the right-hand side $\bb=\sum_{j=1}^{N_\Omega}\omega_j(\BD_j+\BSigma_t)^{-1}(\bG+\bg_j^{\mathrm{bc}})$. We similarly accelerate this construction by applying the sweep-based
low-rank solver to the angle-dependent source
$\bG+\bg_j^{\mathrm{bc}}$ to produce a perturbed right-hand side $\widetilde{\bb}_{\mathrm{lr}}$.

In the above low-rank matrix-vector
products, we intentionally adopt randomized greedy angular sampling.
Unlike successive SI iterates near convergence, the input vectors in GMRES correspond to distinct preconditioned Arnoldi directions and may represent
different combinations of the spectral components of the operator
$\BI-\BT\BSigma_s$, rather than successive approximations to the same solution. Consequently, the angular basis constructed for one matrix--vector product may not remain representative of subsequent Krylov iterations, limiting the effectiveness of DEIM-based sampling. Randomized greedy sampling instead adapts the selected angles to each
matrix--vector product guided by its own residual.

\subsection{Inexact FGMRES with adaptively relaxed low-rank matvecs}
\label{sec:gmres-theory}
We integrate the low-rank matrix--vector products from
Sec.~\ref{sec:lr-matvec} into a flexible GMRES (FGMRES) solver for the integral scalar-flux reformulation. To improve efficiency, we adaptively control
the accuracy of low-rank products based on the GMRES residual,
following the heuristic strategy guided by the inexact-GMRES analysis in
\cite{simoncini2003theory}.

\textbf{Adaptive accuracy relaxation within the low-rank matrix--vector product.}
As in low-rank SI--DSA, we use a relaxation factor
$\tau_k\in[\tau_{\min},\tau_{\max}]$ to determine the inner-loop
thresholds at the $k$-th GMRES iteration.
Opposite to SI--DSA (Sec.~\ref{sec:adaptive-relaxation}), the residual-gap
analysis below motivates tight initial accuracy followed by relaxation
as the residual decreases.
Accordingly, we initialize $\tau_1=\tau_{\min}$ and update
\begin{equation}
    \tau_k
    =
    \min\left\{
        \tau_{\max},
        \frac{\tau_{\min}}
        {\max\left\{
            \|\br^{(k-1)}\|_2/\|\widetilde{\bb}\|_2,
            \tau_{\min}
        \right\}}
    \right\},
    \qquad k\geq 2,
    \label{eq:gmres-relaxation-rule}
\end{equation}
where $\br^{(k-1)}$ is the residual after the $(k-1)$-st GMRES
iteration.
The stopping thresholds for the randomized greedy low-rank solver
in Sec.~\ref{sec:rss-review} are set to
$\varepsilon_{\mathrm{res}}^{(k)}
=\varepsilon_{\mathrm{diff}}^{(k)}=\tau_k$.
The resulting inexact GMRES--DSA method is summarized in
Alg.~\ref{alg:ta-lr-fgmres}.

\begin{algorithm}[h]
\caption{Inexact FGMRES-DSA with adaptively relaxed low-rank matvecs.\label{alg:ta-lr-fgmres}}
\begin{algorithmic}[1]
\STATE{\textbf{Input}: Initial guess $\bphi^{(0)}$; approximate right-hand side
$\widetilde{\bb}$; right DSA preconditioner $\BM_{\mathrm{DSA}}^{-1}=\BI+\BA_{\mathrm{diff}}^{-1}\BSigma_s$ from \eqref{eq:dsa-preconditioner};
low-rank matrix--vector multiplication
$\BA_{\mathrm{lr}}(\,\cdot\,;\tau)$ from \eqref{eq:lr-matvec-operator}, with
inner-loop thresholds driven by $\tau$; maximum and minimum 
thresholds $\tau_{\max}$ and $\tau_{\min}$; relative tolerance 
$\varepsilon_{\mathrm{GMRES}}$; maximum number of iterations $m$.}
\STATE{\textbf{Initialization}: Compute the initial residual 
$\br^{(0)}=\widetilde{\bb}-\BA_{\mathrm{lr}}(\bphi^{(0)};\tau_{\min})$. 
Set $\beta=\|\br^{(0)}\|_2$, $\bq^{(1)}=\br^{(0)}/\beta$,
and $\tau_1=\tau_{\min}$.}
\FOR{$k=1,2,\dots,m$}
    \STATE{\textbf{Preconditioning and inexact matrix--vector product:}
    Compute
    $\bz^{(k)}=\BM_{\mathrm{DSA}}^{-1}\bq^{(k)}$ and
    $\bu^{(k)}=\BA_{\mathrm{lr}}(\bz^{(k)};\tau_k)$.}

    \STATE{\textbf{Arnoldi orthogonalization (modified Gram--Schmidt):}}
    \FOR{$i=1,2,\dots,k$}
        \STATE{Compute
        $h_{i,k}=(\bq^{(i)})^{\mathsf{T}}\bu^{(k)}$ and update
        $\bu^{(k)}=\bu^{(k)}-h_{i,k}\bq^{(i)}$.}
    \ENDFOR
    \STATE{Compute
    $h_{k+1,k}=\|\bu^{(k)}\|_2$ and
    $\bq^{(k+1)}=\bu^{(k)}/h_{k+1,k}$.}
    \STATE{Form the updated Hessenberg matrix
   $
        \overline{\BH}_k
        =
        \begin{bmatrix}
            \overline{\BH}_{k-1}
            &
            (h_{1,k},\ldots,h_{k,k})^{\mathsf{T}}
            \\
            \mathbf{0}^{\mathsf{T}}
            &
            h_{k+1,k}
        \end{bmatrix}.
    $}

    \STATE{\textbf{Residual minimization:}
    Apply Givens rotations to the last column of
    $\overline{\BH}_k$ and solve
        $\by^{(k)}
        =
        \arg\min_{\by\in\mathbb{R}^{k}}
        \left\|
            \beta\be_1-\overline{\BH}_k\by
        \right\|_2,$ and compute the residual norm as $\|\br^{(k)}\|_2
        =
        \left\|
            \beta\be_1-\overline{\BH}_k\by^{(k)}
        \right\|_2$,
    where $\be_1=(1,0,\dots,0)^T$.}

    \IF{$\|\br^{(k)}\|_2/
        \|\widetilde{\bb}\|_2
        \leq\varepsilon_{\mathrm{GMRES}}$}
        \STATE{\textbf{break}.}
    \ENDIF

    \STATE{\textbf{Relaxation update:}
        $\tau_{k+1}
        =
        \min\left\{
            \tau_{\max},
            \frac{\tau_{\min}}
            {\max\left\{
                \|\br^{(k)}\|_2/
                \|\widetilde{\bb}\|_2,
                \tau_{\min}
            \right\}}
        \right\}.$}
\ENDFOR
\STATE{\textbf{Solution update}: Solve the least-squares problem
$\by^{(k)}\gets\arg\min_{\by}\|\beta\be_1-\overline{\BH}_k\by\|_2$ and update
$\bphi\gets\bphi^{(0)}+\BZ_k\by^{(k)}$, where 
$\BZ_k=[\bz^{(1)},\dots,\bz^{(k)}]$.}
\STATE{\textbf{Output}: Approximate scalar flux $\bphi$.}
\end{algorithmic}
\end{algorithm}

\textbf{Theoretical foundation.} 
The threshold-control rule \eqref{eq:integral-scalar-flux-def} in Alg.~\ref{alg:ta-lr-fgmres}  is motivated by the residual-gap analysis of inexact GMRES~\cite{simoncini2003theory}. When matrix--vector products are computed inexactly, the residual obtained from the Arnoldi recurrence deviates from the true residual by a gap that depends on the accumulated errors in these products. The following theorem extends Theorem 5.1 of \cite{simoncini2003theory} to FGMRES and quantifies this relation in the presence of a perturbed right-hand side. The proof follows the analysis in \cite{simoncini2003theory} and is deferred to Appx.~\ref{app:gmres-proof}. 

\begin{theorem}[\textit{Residual-gap estimate for inexact FGMRES with an inexact right-hand side}]
\label{thm:residual-gap}
Consider the exact system $\BA\bphi=\bb$, with an approximate
right-hand side $\widetilde{\bb}$ and initial matrix--vector product 
satisfying
\begin{equation}
    \|\bb-\widetilde{\bb}\|_2\leq\varepsilon_0,
    \qquad
    \|\BA\bphi^{(0)}
    -\BA_{\mathrm{lr}}(\bphi^{(0)};\tau_0)\|_2
    \leq\varepsilon_0.
\end{equation}
The computed initial residual is defined by
\[
\br^{(0)}
:=
\widetilde{\bb}-\BA_{\mathrm{lr}}(\bphi^{(0)};\tau_0),
\qquad
\beta:=\|\br^{(0)}\|_2>0,
\qquad
\bq^{(1)}:=\br^{(0)}/\beta.
\]
Suppose that $m$ steps of the inexact flexible Arnoldi process, starting from
$\bq^{(1)}$, produce
\begin{equation}
\BA\BZ_m+\BE_m=\BQ_{m+1}\overline{\BH}_m,
\qquad
\BE_m=
\begin{bmatrix}
\bepsilon_1 & \cdots & \bepsilon_m
\end{bmatrix},
\label{eq:inexact_flexible_arnoldi}
\end{equation}
where $\BQ_{m+1}$ has orthonormal columns and
$\overline{\BH}_m\in\mathbb{R}^{(m+1)\times m}$ is upper Hessenberg
and has full column rank.
For $k=1,\ldots,m$, let $\BZ_k$ and $\BQ_{k+1}$ contain the first $k$
and $k+1$ columns of $\BZ_m$ and $\BQ_{m+1}$, respectively, and let
$\overline{\BH}_k
:=
\overline{\BH}_m(1\!:\!k+1,1\!:\!k).$
Define
\begin{equation}
\by^{(k)}
:=
\arg\min_{\by\in\mathbb{R}^k}
\left\|
\beta\be_1-\overline{\BH}_k\by
\right\|_2,
\qquad
\bphi^{(k)}
:=
\bphi^{(0)}+\BZ_k\by^{(k)},
\label{eq:fgmres_iterate}
\end{equation}
and define the corresponding computed residual by
\begin{equation}
\br^{(k)}
:=
\BQ_{k+1}
\left(
\beta\be_1-\overline{\BH}_k\by^{(k)}
\right).
\label{eq:computed_fgmres_residual}
\end{equation}
The true residual at the $m$-th iteration is
$\br_{\mathrm{true}}^{(m)}:=\bb-\BA\bphi^{(m)}$.
Let $\varepsilon_{\mathrm{gap}}>0$ be a prescribed residual-gap tolerance,
and assume $0\leq\varepsilon_0< \frac{1}{2}\varepsilon_{\textrm{gap}}$.
Assume that $\|\br^{(k-1)}\|_2>0$ for $k=1,\ldots,m$.
If the error in each inexact matrix--vector multiplication satisfies
\begin{equation}
\|\bepsilon_k\|_2
\leq
\frac{\sigma_m(\overline{\BH}_m)}{m}
\frac{
\varepsilon_{\mathrm{gap}}
-2\varepsilon_0
}{
\|\br^{(k-1)}\|_2
},
\qquad k=1,\ldots,m,
\label{eq:gmres-error-constraint}
\end{equation}
where $\sigma_m(\overline{\BH}_m)>0$ is the smallest singular value
of $\overline{\BH}_m$, then
$
\left\|
\br_{\mathrm{true}}^{(m)}-\br^{(m)}
\right\|_2
\leq
\varepsilon_{\mathrm{gap}}.
$
\end{theorem}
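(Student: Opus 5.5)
Starting from the definitions, I will write the true residual at step $m$ as
\[
\br_{\mathrm{true}}^{(m)}=\bb-\BA\bphi^{(0)}-\BA\BZ_m\by^{(m)}.
\]
The next move is to substitute the inexact flexible Arnoldi relation \eqref{eq:inexact_flexible_arnoldi} in the form $\BA\BZ_m=\BQ_{m+1}\overline{\BH}_m-\BE_m$. This gives
\[
\br_{\mathrm{true}}^{(m)}=\bb-\BA\bphi^{(0)}-\BQ_{m+1}\overline{\BH}_m\by^{(m)}+\BE_m\by^{(m)}.
\]
Since $\bq^{(1)}=\br^{(0)}/\beta$, we have $\BQ_{m+1}\beta\be_1=\br^{(0)}=\widetilde{\bb}-\BA_{\mathrm{lr}}(\bphi^{(0)};\tau_0)$. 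Adding and subtracting this term, and recalling the definition \eqref{eq:computed_fgmres_residual}, the gap splits as
\[
\br_{\mathrm{true}}^{(m)}-\br^{(m)}=(\bb-\widetilde{\bb})+\bigl(\BA_{\mathrm{lr}}(\bphi^{(0)};\tau_0)-\BA\bphi^{(0)}\bigr)+\BE_m\by^{(m)}.
\]
By the two initial assumptions, the first two terms together are bounded by $2\varepsilon_0$. It therefore suffices to show $\|\BE_m\by^{(m)}\|_2\leq\varepsilon_{\mathrm{gap}}-2\varepsilon_0$. The assumption $\varepsilon_0<\varepsilon_{\mathrm{gap}}/2$ is exactly what keeps this budget positive.

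For the remaining term I write $\BE_m\by^{(m)}=\sum_{k=1}^m\bepsilon_k\,y^{(m)}_k$, so that
\[
\|\BE_m\by^{(m)}\|_2\leq\sum_k\|\bepsilon_k\|_2\,|y^{(m)}_k|.
\]
The main obstacle is bounding the individual components $|y^{(m)}_k|$ by $\|\br^{(k-1)}\|_2/\sigma_m(\overline{\BH}_m)$; this is the analogue of Lemma 5.1 of \cite{simoncini2003theory}. The key observation is that only the Hessenberg least-squares structure matters; the flexible preconditioning plays no role, because the $\bz^{(k)}$ enter only through $\BZ_m$.

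To prove the component bound I will follow Simoncini--Szyld. Let $\BQ_{m+1}^{\mathrm{G}}$ be the product of Givens rotations reducing $\overline{\BH}_m$ to $\begin{bmatrix}\BR_m\\ \mathbf{0}^T\end{bmatrix}$, and let $\bg=\BQ_{m+1}^{\mathrm{G}}\beta\be_1$. Then $\by^{(m)}=\BR_m^{-1}\bg(1\!:\!m)$. The nestedness of the Givens process gives $\|\bg(k\!:\!m)\|_2\leq\|\br^{(k-1)}\|_2$, since $\|\br^{(k-1)}\|_2=|g_k^{(k-1)}|$ and later rotations preserve the norm of the trailing part. I will then use the standard identity
\[
\be_k^T\by^{(m)}=\be_k^T\BR_m^{-1}\bg(1\!:\!m),
\]
together with the upper triangularity of $\BR_m^{-1}$, so that only entries $k\!:\!m$ of $\bg$ contribute:
\[
|y^{(m)}_k|\leq\|\BR_m^{-1}\|_2\,\|\bg(k\!:\!m)\|_2\leq\frac{\|\br^{(k-1)}\|_2}{\sigma_m(\overline{\BH}_m)}.
\]
Here I use $\sigma_{\min}(\BR_m)=\sigma_m(\overline{\BH}_m)$ and the full column rank assumption.

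Finally, substituting the hypothesis \eqref{eq:gmres-error-constraint} into each summand gives
\[
\|\bepsilon_k\|_2\,|y^{(m)}_k|\leq\frac{\varepsilon_{\mathrm{gap}}-2\varepsilon_0}{m}.
\]
Summing over $k=1,\ldots,m$ yields $\|\BE_m\by^{(m)}\|_2\leq\varepsilon_{\mathrm{gap}}-2\varepsilon_0$. Combined with the $2\varepsilon_0$ bound from the first step, this gives $\|\br_{\mathrm{true}}^{(m)}-\br^{(m)}\|_2\leq\varepsilon_{\mathrm{gap}}$.
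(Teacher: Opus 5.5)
Your proposal is correct and follows the paper's proof. You use the same splitting of the gap into $(\bb-\widetilde{\bb})+\bigl(\BA_{\mathrm{lr}}(\bphi^{(0)};\tau_0)-\BA\bphi^{(0)}\bigr)+\BE_m\by^{(m)}$, the same componentwise bound $|(\by^{(m)})_k|\leq\|\br^{(k-1)}\|_2/\sigma_m(\overline{\BH}_m)$, and the same summation against \eqref{eq:gmres-error-constraint}. The only difference is that you prove the componentwise bound yourself, via the nested Givens reduction and the upper-triangularity of $\mathbf{R}_m^{-1}$, whereas the paper invokes Lemma~5.1 of \cite{simoncini2003theory} and notes that its proof uses only the Hessenberg least-squares structure.
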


Under the assumptions of Theorem~\ref{thm:residual-gap}, the residual
gap is bounded by $\varepsilon_{\mathrm{gap}}$, so
$\|\br^{(m)}\|_2\leq\varepsilon$ implies
$\|\br_{\mathrm{true}}^{(m)}\|_2\leq\varepsilon+\varepsilon_{\mathrm{gap}}$.
Thus, the computed residual provides a reliable convergence indicator
up to the prescribed residual-gap tolerance.

The bound also motivates adaptive accuracy control of the inexact
matrix--vector products. As noted in \cite{simoncini2003theory},
the smallest singular value $\sigma_m(\overline{\BH}_m)$ in
\eqref{eq:gmres-error-constraint} is generally unavailable in practice. We therefore use the bound to inform the choice of thresholds. Assuming $\|\bepsilon_k\|_2\lesssim\tau_k$,
the error dependence in
\eqref{eq:gmres-error-constraint} motivates the empirical rule
\begin{equation}
\label{eq:tolerance-rule}
\tau_k \propto \frac{1}{\|\br^{(k-1)}\|_2},
\qquad
\varepsilon_{\mathrm{res}}^{(k)} \propto \tau_k,
\qquad
\varepsilon_{\mathrm{diff}}^{(k)} \propto \tau_k.
\end{equation}
As the computed residual decreases, this rule allows larger thresholds and hence cheaper inexact matrix--vector products,
guiding the threshold relaxation in
Alg.~\ref{alg:ta-lr-fgmres}.

The theorem reserves $2\varepsilon_0$ for the relaxed and initial-product errors, leaving $\varepsilon_{\mathrm{gap}}-2\varepsilon_0$ for subsequent matrix--vector product errors. Hence, a tight threshold should be applied when constructing the
right-hand side and initial residual. For the zero initial guess used throughout
Sec.~\ref{sec:numerical}, $\br^{(0)}=\widetilde{\bb}$ and the
initial-product error vanishes, increasing the allowable error
 for subsequent iterations to
$\varepsilon_{\mathrm{gap}}-\varepsilon_0$.

Crucially, the relaxed approximation and inexact matrix--vector products
are the only sources of low-rank approximation error in our method.
Unlike low-rank Krylov methods that also truncate basis vectors or
their orthogonalization updates \cite{kressner2011low,dolgov2013tt},
our method does not compress the scalar-flux Krylov basis.
Such truncations can perturb the Arnoldi relation and compromise
orthogonality, potentially requiring additional error control and
reorthogonalization \cite{palitta2021convergence}.
This orthonormality is used both in the analysis of
\cite{simoncini2003theory} and in the proof of
Theorem~\ref{thm:residual-gap} (Appendix~\ref{app:gmres-proof})
to show that the projected least-squares residual norm equals
the computed residual norm.

\section{Numerical examples\label{sec:numerical}}

We evaluate the accuracy, efficiency, and robustness of the proposed
low-rank solvers on a series of benchmark problems. The methods considered
are low-rank SI--DSA (LR--SI--DSA) and FGMRES--DSA with low-rank
matrix--vector products (LR--GMRES--DSA). Throughout this section, LR and FR
denote low-rank and full-rank computations, respectively.

For each comparison, the LR and FR variants use identical DSA
preconditioners and zero initial guesses. A fully-consistent DSA preconditioner \cite{Adams2002FastIM} is applied. The diffusion-correction equation in DSA is solved using conjugate gradient method
preconditioned by a V-cycle algebraic multigrid (AMG) with a relative residual
tolerance of $10^{-12}$. The AMG is implemented  based on the \texttt{iFEM} package \cite{chen2009integrated}. To enable a direct comparison
between SI--DSA and GMRES--DSA, we replace the relative-residual criterion
specified for GMRES in Alg.~\ref{alg:ta-lr-fgmres} with a common
scalar-flux stopping criterion. Both solvers terminate when
\[
\|\bphi^{(n)}-\bphi^{(n-1)}\|_{\infty}
\leq \varepsilon_{\phi},
\qquad \varepsilon_{\phi}=10^{-6}.
\]
Unless otherwise stated, the adaptive thresholds are bounded by
$\tau_{\min}=10^{-7}$ and $\tau_{\max}=10^{-2}$. Fixed-accuracy low-rank methods
use $\tau_{\min}$ throughout, whereas adaptive low-rank approaches control
the thresholds from $[\tau_{\min},\tau_{\max}]$ according to the strategies
developed in the preceding sections. 
 
For each test, we report the number of source or Krylov iterations, wall-clock
time, speedup, total number of sweeps, and errors relative to the corresponding FR solution. The
speedup is defined by
$\text{speedup}=\frac{t_{\mathrm{FR}}}{t_{\mathrm{LR}}}$,
where $t_{\mathrm{FR}}$ and $t_{\mathrm{LR}}$ are the wall-clock times of the
corresponding FR and LR solvers. For FGMRES--DSA, the reported sweep
counts include the sweeps required to construct the right-hand side. Thus,
for FR--GMRES--DSA with zero initial guess,
\[
\text{Total number of sweeps}=(\text{Number of iterations}+1)\times N_\Omega.
\]
To assess sensitivity to randomized
sampling, LR results are reported as the mean $\pm$ standard deviation
over ten independent random seeds.

All methods are implemented in {\tt Matlab R2025b}. All the numerical tests are performed on a 2025 MacBook Pro laptop with a M4 chip and 24~GB  memory.

\subsection{Homogeneous medium in the diffusion regime}
\label{sec:homo}

We consider the computational domain $[-1,1]^2$ with no absorption,
a constant scattering $\sigma_s=100$, and zero inflow
boundary conditions. An isotropic Gaussian source is imposed as
\begin{equation}
    G(x,y)=\exp\bigl(-100(x^2+y^2)\bigr).
\end{equation}
The spatial domain is discretized using an $80\times80$ rectangular
mesh, while the CL$(40,20)$ angular quadrature is applied. 

Due to strong scattering, this problem is close to its diffusion limit, thereby admits efficient low-rank compression. We compare the full-rank, fixed-accuracy low-rank, and
accuracy-adaptive low-rank SI--DSA and FGMRES--DSA. As mentioned, each method is tested repeatedly against ten random seed to test their robustness. The
results are summarized in
Tab.~\ref{tab:case1_results}. All low-rank solvers achieve an error on the order of $10^{-7}$, a magnitude smaller than the outer-loop stopping threshold $\varepsilon_{\phi}=10^{-6}$. 

\textbf{LR--SI--DSA.} Due to the high compressibility of this problem, fixed-accuracy
LR--SI--DSA is already efficient. It reduces the total number of sweeps by a factor of approximately $13$ and achieves a mean speedup of $7.15\times$.
Adaptive accuracy control reduces the number of sampled angles and hence the transport-sweep cost of each inner solve. 
The loose initial threshold $\tau_{\max}=10^{-2}$ reduces the mean sweeps per
outer iteration from $60.1$ for fixed accuracy to $32.3$ but requires $1.78$ times as many outer
iterations. Despite the larger outer iteration count,
the total number of sweeps decreases slightly from approximately $307$ to $294$,
resulting in a speedup of $5.30\times$.
With more conservative maximum thresholds, $\tau_{\max}=10^{-3}$ and $10^{-4}$,  low-rank SI converges with outer iterations closer to full-rank SI, and reduces the total sweeps to approximately $239$ and $230$, respectively. In particular, with
$\tau_{\max}=10^{-4}$, the adaptive accuracy control leads to a speedup of
$8.02\times$ slightly higher than the fixed-accuracy variant.

\textbf{LR--FGMRES--DSA.}
LR--FGMRES--DSA is considerably more robust to aggressive relaxation. With
$\tau_{\max}=10^{-2}$, all runs retain the same $6$ Krylov iterations as the
fixed-accuracy and full-rank methods, while the mean number of transport
sweeps decreases from approximately $5600$ in full-rank methods to $771.2$ with fixed accuracy and $288.9$ with adaptive relaxation. 
The adaptively relaxed method is therefore $2.06\times$ faster than its
fixed-accuracy counterpart and $8.78\times$ faster than the full-rank FGMRES-DSA,
without increasing the scalar-flux error.

\textbf{Comparison.}
Overall, adaptive accuracy control accelerates the low-rank solvers by reducing
the number of angles sampled during the iterations. In this test,
FGMRES--DSA tolerates aggressive relaxation without additional Krylov
iterations, whereas adaptively relaxed low-rank SI--DSA requires a tighter upper threshold to preserve
its outer convergence iteration counts. For this diffusive homogeneous problem, SI--DSA converges rapidly and avoids the additional sweeps required to construct the FGMRES right-hand side, making its best adaptive variant slightly faster.

\begin{table}[!htbp]
\centering
\scriptsize
\textup{(a) SI--DSA with different maximum relaxation factors.}
\vspace{1mm}
\begin{tabular}{c l c c c c c}
\toprule
$\tau_{\max}$
& Method
& Time (s)
& Iter.
& Sweeps 
& Rel. Err. $\phi$
&  Speedup \\
\midrule

--
& FR
& 130.78
& 5
& 4000
& --
& 1.00 \\

--
& LR--F
& 18.30 $\pm$ 1.31
& 5.1 $\pm$ 0.3
& 306.5 $\pm$ 19.3
& $(1.68\pm0.69)\times10^{-7}$
& 7.15 \\

\midrule

$10^{-2}$
& LR--A
& $24.66 \pm 5.14$
& $9.1 \pm 1.3$
& $294.0 \pm 77.0$
& $(1.56\pm0.46)\times10^{-7}$
& 5.30 \\

$10^{-3}$
& LR--A
& $17.83 \pm 1.91$
& $6.2 \pm 0.4$
& $239.0 \pm 36.0$
& $(1.26\pm0.29)\times10^{-7}$
& 7.33 \\

$10^{-4}$
& LR--A
& \textbf{$16.30 \pm 1.28$}
& $5.1 \pm 0.3$
& $230.0 \pm 20.0$
& $(1.49\pm0.86)\times10^{-7}$
& \textbf{8.02} \\

\bottomrule
\end{tabular}%
\vspace{3mm}

\textup{(b) FGMRES with DSA preconditioning. Sweep counts include those used
for both relaxed construction and Krylov matrix--vector products.}

\vspace{1mm}
\begin{tabular}{c l c c  c c c}
\toprule
$\tau_{\max}$
& Method
& Time (s)
& Iter.
& Sweeps
& Rel. Err. $\phi$
& Speedup \\
\midrule

--
& FR
& 192.66
& 6
& 5600
& --
& 1.00 \\

--
& LR--F
& 45.26 $\pm$ 0.79
& 6.0 $\pm$ 0.0
& 771.2 $\pm$ 6.3
& $(2.61\pm1.97)\times10^{-7}$
& 4.26 \\

\midrule

$10^{-2}$
& LR--A
& \textbf{21.95 $\pm$ 0.64}
& 6.0 $\pm$ 0.0
& 288.9 $\pm$ 4.6
& $(2.63\pm1.94)\times10^{-7}$
& \textbf{8.78} \\

\bottomrule
\end{tabular}%
\caption{Results for the homogeneous diffusion-regime problem in
Sec.~\ref{sec:homo}. LR-F: low-rank methods with fixed inner-loop accuracy. LR-A: low-rank methods with adaptive inner-loop accuracy relaxation. Speedup represents the mean value.}
\label{tab:case1_results}
\end{table}

\subsection{Lattice problem}
\label{sec:lattice}

We next consider a lattice problem with zero inflow boundary conditions
on the computational domain $[0,5]^2$. The material configuration is
shown in Fig.~\ref{fig:lattice}. The black regions are purely
absorbing with $(\sigma_a,\sigma_s)=(100,0)$, whereas the remaining
region is purely scattering with $(\sigma_a,\sigma_s)=(0,1)$. A
constant source $G(x,y)=1$ is imposed in the central orange region. We use a uniform $80\times80$ rectangular spatial mesh
and the CL$(32,16)$ quadrature in angular space.
A reference full-rank scalar flux is shown in Fig.~\ref{fig:lattice}.

This example tests whether adaptive inner tolerances convert a looser outer
accuracy requirement into computational savings. The two solvers adapt their
inner accuracy in opposite directions. 
As suggested by our analysis, LR--SI starts from $\tau_{\max}=10^{-2}$ and tightens
toward the fixed bound $\tau_{\min}=0.1\varepsilon_\phi$, whereas LR--FGMRES starts from
$\tau_{\min}=0.1\varepsilon_\phi$ and relaxes toward
$\tau_{\max}=10^{-2}$ as the Krylov residual decreases. The results are
summarized in Tabs.~\ref{tab:lattice_si_cost} and
\ref{tab:lattice_fgmres_cost}.
Errors are measured against the full-rank SI-DSA reference computed with $\varepsilon_\phi=10^{-8}$.

\textbf{Accuracy and convergence.}
Across all tested outer tolerances, the fixed-accuracy and adaptive-accuracy low-rank methods closely track
the iteration counts of their full-rank counterparts. For SI--DSA, the
low-rank errors are essentially identical to the corresponding full-rank
errors, indicating that the total error is dominated by the prescribed outer stopping tolerance. For low-rank FGMRES--DSA, the error is larger than the full-rank counterpart but still comparable or below the prescribed outer-loop tolerance.
The small variations across random seeds
further demonstrate the robustness of our methods.

\textbf{Efficiency gains from relaxed outer tolerances.}
For LR--SI--DSA, as the outer convergence tolerance is
loosened, the fixed-accuracy variant provides nearly uniform speedups of $1.10$--$1.31\times$. Meanwhile, the speedup of the adaptive-accuracy
variant increases monotonically as the outer convergence tolerance is
loosened, from $1.88\times$ at $\varepsilon_\phi=10^{-6}$ to
$2.78\times$ at $\varepsilon_\phi=10^{-3}$.

The gain is more pronounced for LR-FGMRES--DSA.
As $\varepsilon_\phi$ is
relaxed from $10^{-6}$ to $10^{-3}$, the speedup of fixed-accuracy variant increases from
$1.30\times$ to $2.53\times$, while the speedup of the variant with adaptive accuracy control increases from
$2.13\times$ to $4.04\times$. Relative to the fixed-accuracy low-rank FGMRES, adaptive relaxation
provides an additional $1.59$--$1.76\times$ acceleration while preserving
the full-rank Krylov iteration counts. These results demonstrate that
adaptive accuracy control produces increasing computational savings as the desired outer accuracy is relaxed.

\textbf{Mechanism and robustness.}
Fig.~\ref{fig:lattice_adaptive_summary} illustrates the mechanisms behind
these gains. For LR--SI--DSA, the tolerance tightens from $10^{-2}$ toward
$10^{-7}$, and the effective rank correspondingly increases from
approximately $55$ to $225$--$230$. Adaptive accuracy control therefore postpones
high-rank computations until they become necessary. For LR--FGMRES--DSA,
consistent with Theorem~\ref{thm:residual-gap}, the right-hand side and early
matrix--vector products require high accuracy, whereas later products can be
relaxed and computed at lower rank as the Krylov residual decreases. The
LR--FGMRES and LR-SI residual histories closely follow the full-rank history, while the
standard-deviation bands remain almost invisible, further demonstrating the robustness against
randomized sampling.

\textbf{Comparison between LR-GMRES and LR-SI.}
Across all common outer tolerances
$\varepsilon_\phi=10^{-6}$--$10^{-3}$, LR--FGMRES with adaptive accuracy control consistently requires
fewer iterations and less computational time than LR--SI with adaptive accuracy control. It completes in
$4$--$6$ Krylov iterations, compared with $6$--$11$ outer SI iterations, and
is $1.70$--$1.89\times$ faster. Moreover, the speedup of LR-GMRES with adaptive accuracy control over the corresponding
full-rank solver increases from $2.13\times$ to $4.04\times$, compared with
$1.88\times$ to $2.78\times$ for LR-SI with adaptive accuracy control. Thus, adaptively relaxed LR--FGMRES is
consistently the most efficient method across all shared accuracy levels.

\textbf{Ray-effect mitigation.}
Another practical benefit of the low-rank formulation is that part of the
computational savings can be reinvested into a finer angular discretization.
For the full-rank SI-DSA with $\varepsilon_{\phi}=10^{-6}$ and CL($32$,$16$) quadrature rule, the runtime in Tab. \ref{tab:angular_resolution_summary} is $184.69$~s,
LR--A requires only $99.20\pm2.47$~s at the same angular resolution and outer convergence tolerance (Tab. \ref{tab:angular_resolution_summary}).
Increasing the angular resolution to CL($40$,$20$) quadrature rule, the wall clock time for LR-SI-DSA with adaptive accuracy control and the same outer tolerance becomes 
$147.66\pm8.52$~s, which is still lower than the FR-SI-DSA with
CL($32$,$16$) quadrature. As shown in Fig. \ref{fig:lattice_ray_effect}, the ray effects are slightly less severe using LR-SI-DSA with finer angular resolution. 

\textbf{Impact of angular resolutions.}
To investigate the impact of angular resolution on low-rank performance, we fix the spatial grid at $(N_x,N_y)=(80,80)$ and the outer convergence tolerance at $\varepsilon_{\phi}=10^{-6}$. Tab.~\ref{tab:angular_resolution_summary} summarizes the performance of LR--SI--DSA with adaptive accuracy control at various angular resolutions. 
Results in Tab.~\ref{tab:angular_resolution_summary} were obtained in a separate set of runs with $10$ random seeds from those in Tab.~\ref{tab:lattice_si_cost} and errors were computed against different reference solutions, hence runtimes and errors for the shared configuration do not perfectly match.
As angular resolution increases, the effective rank grows more slowly than the total number of angles, reducing the compression ratio, i.e. the ratio between the solution rank and the angle number,  from $0.500$ to $0.340$ and increasing the sweep reduction from $2.06$ to $4.54$. Overall, LR-SI-DSA with adaptive accuracy control achieves mean speedups of $1.55$--$2.19$ across all tested resolutions while preserving the angular accuracy of the full-rank solver. However, runtime speedups do not increase monotonically with angular resolution due to the growing rank $r$: low-rank solvers involve low-rank overheads that scale as $O(r^2)$ or $O(r^3)$.

\begin{figure}[!htbp]
    \centering
    \includegraphics[width=0.47\textwidth]{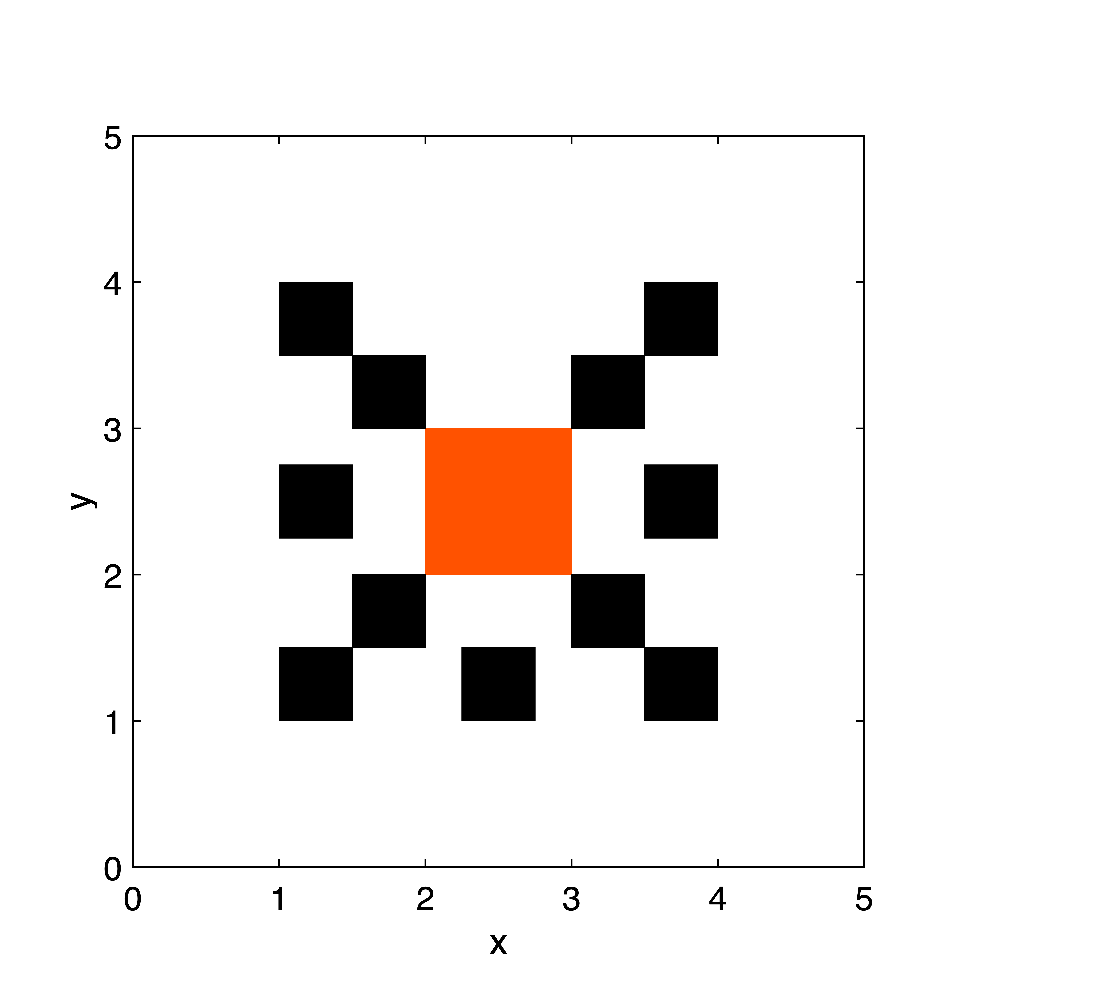}
    \includegraphics[width=0.47\textwidth]{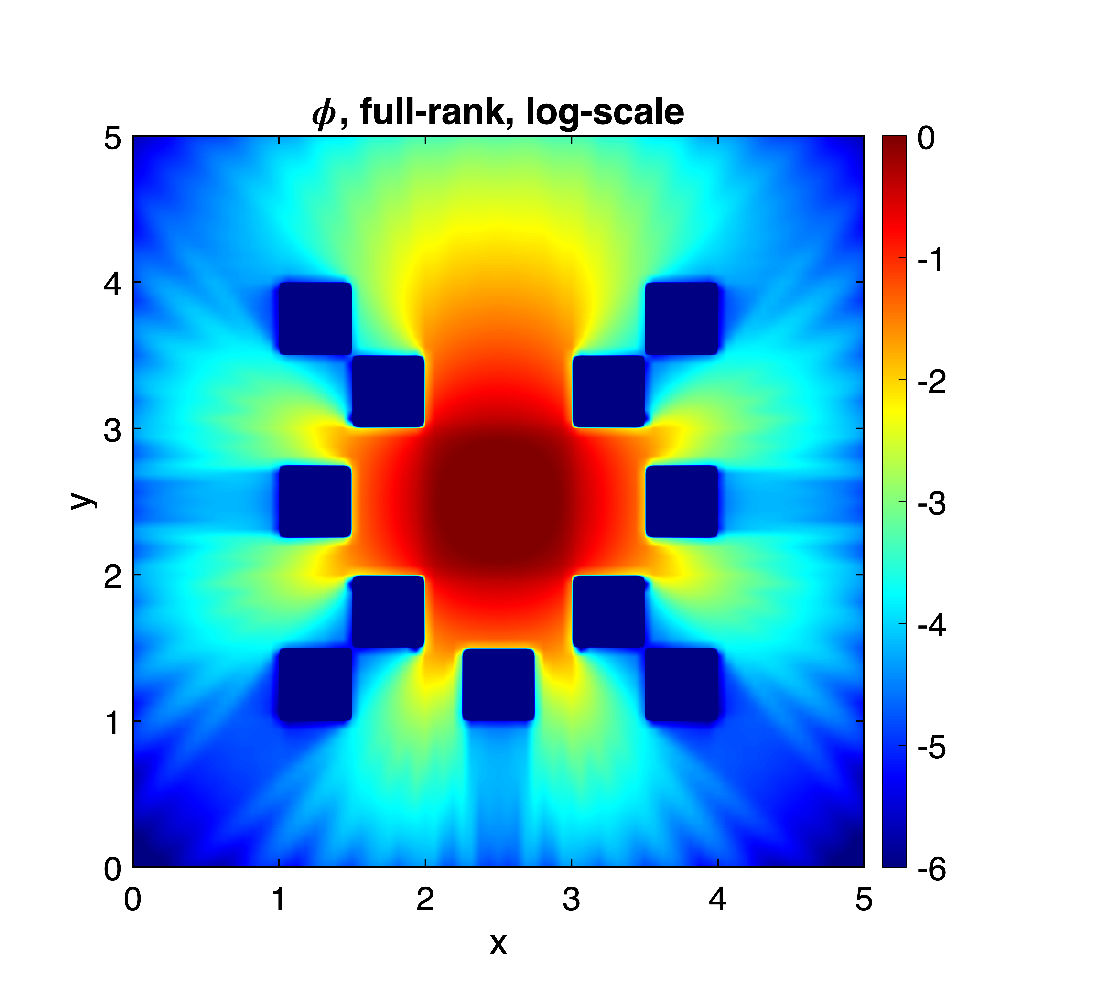}
        \caption{
Material configuration and scalar flux for the lattice problem in Sec.~\ref{sec:lattice}.
Left: material configuration.
Right: scalar flux obtained by FR--FGMRES--DSA on a logarithmic scale.
    \label{fig:lattice}
}
\end{figure}

\begin{figure}[!htbp]
    \centering
    \includegraphics[width=0.45\textwidth]{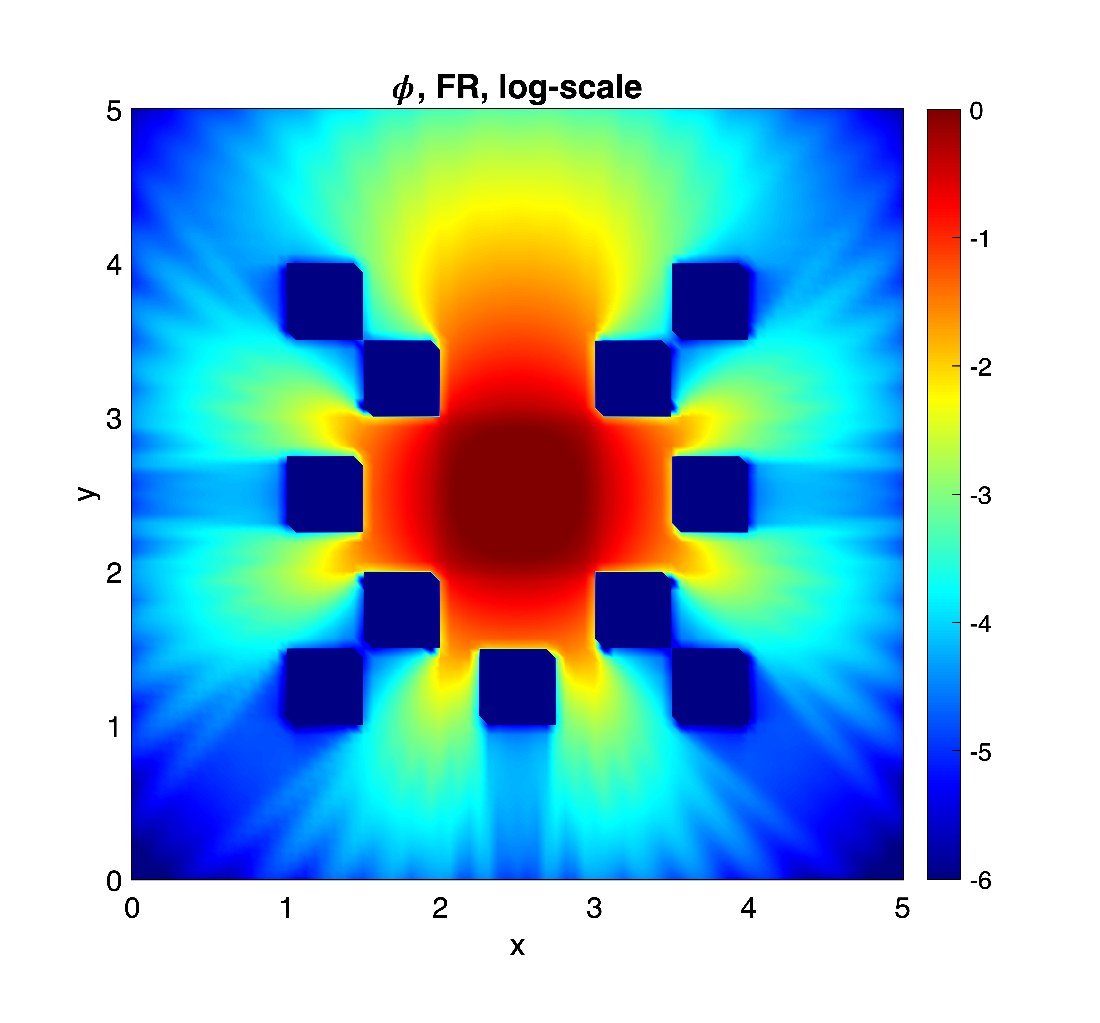}
    \includegraphics[width=0.45\textwidth]{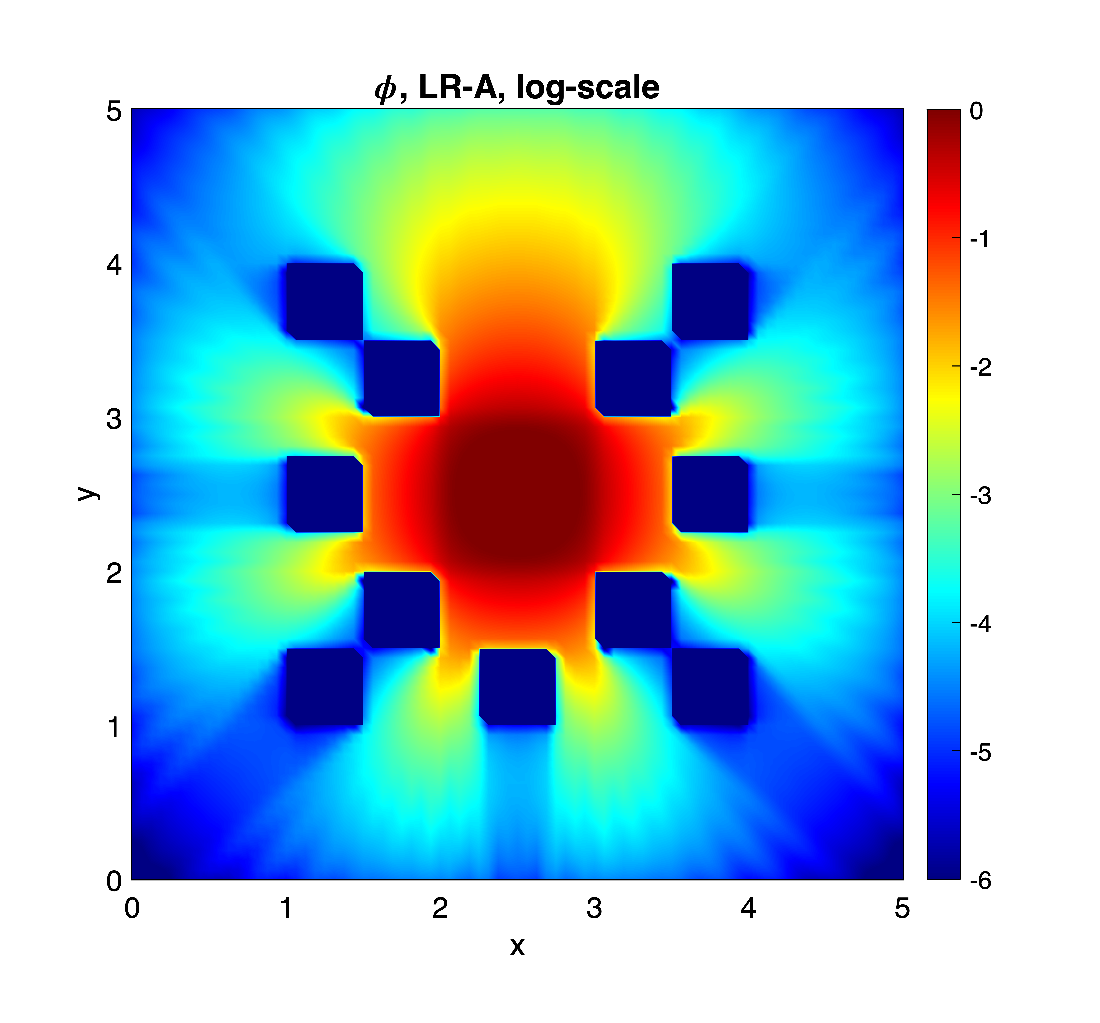}
    \caption{Scalar flux for the lattice problem in Sec. \ref{sec:lattice} using two angular resolutions. SI-DSA with $\varepsilon_{\phi}=10^{-6}$ is applied. Left: FR-SI-DSA with $CL(32,16)$ quadrature rule, total  computational time $184.69$ seconds (Tab. \ref{tab:angular_resolution_summary}). Right: LR-SI-DSA with adaptive accuracy control and $CL(40,20)$ quadrature rule, total  computational time $147.66\pm8.52$ seconds (mean $\pm$ std over ten random seeds). 
    Both solutions are shown on a logarithmic scale to highlight the effect of angular refinement on the directional transport features.}
    \label{fig:lattice_ray_effect}
\end{figure}

\begin{table}[!htbp]
\centering

\tiny

\begin{tabular}{c l c c c c c}
\toprule
$\varepsilon_\phi$
& Method
& Time (s)
& Iter.
& \shortstack{Sweeps}
& Rel. Err. $\phi$
& Speedup
\\
\midrule

\multirow{3}{*}{$10^{-6}$}
& FR
& 188.27
& 11
& 5632
& $1.44\times10^{-7}$
& 1.00
\\

& LR--F
& $171.50 \pm 15.77$
& $11.3 \pm 0.5$
& $2690.7 \pm 110.8$
& $1.60\times10^{-7}\pm1.23\times10^{-7}$
& 1.10
\\

& LR--A
& $\mathbf{100.16 \pm 2.31}$
& $11.0 \pm 0.0$
& $1788.7 \pm 22.8$
& $1.61\times10^{-7}\pm3.41\times10^{-8}$
& $\mathbf{1.88}$
\\

\midrule

\multirow{3}{*}{$10^{-5}$}
& FR
& 156.17
& 9
& 4608
& $2.52\times10^{-6}$
& 1.00
\\

& LR--F
& $125.26 \pm 2.11$
& $9.0 \pm 0.0$
& $2101.9 \pm 16.0$
& $2.53\times10^{-6}\pm1.29\times10^{-8}$
& 1.25
\\

& LR--A
& $\mathbf{73.38 \pm 3.97}$
& $9.1 \pm 0.3$
& $1338.4 \pm 69.0$
& $2.35\times10^{-6}\pm6.15\times10^{-7}$
& $\mathbf{2.13}$
\\

\midrule

\multirow{3}{*}{$10^{-4}$}
& FR
& 137.98
& 8
& 4096
& $1.05\times10^{-5}$
& 1.00
\\

& LR--F
& $110.68 \pm 4.14$
& $8.0 \pm 0.0$
& $1845.8 \pm 26.2$
& $1.05\times10^{-5}\pm4.95\times10^{-9}$
& 1.25
\\

& LR--A
& $\mathbf{59.31 \pm 1.02}$
& $8.0 \pm 0.0$
& $1093.6 \pm 13.4$
& $1.06\times10^{-5}\pm1.43\times10^{-7}$
& $\mathbf{2.33}$
\\

\midrule

\multirow{3}{*}{$10^{-3}$}
& FR
& 103.62
& 6
& 3072
& $1.82\times10^{-4}$
& 1.00
\\

& LR--F
& $79.04 \pm 1.95$
& $6.0 \pm 0.0$
& $1326.7 \pm 16.7$
& $1.82\times10^{-4}\pm4.39\times10^{-9}$
& 1.31
\\

& LR--A
& $\mathbf{37.22 \pm 0.57}$
& $6.0 \pm 0.0$
& $686.4 \pm 7.1$
& $1.82\times10^{-4}\pm1.42\times10^{-7}$
& $\mathbf{2.78}$
\\


\bottomrule
\end{tabular}%
\caption{Accuracy and computational performance of SI--DSA for the lattice
problem in Sec. \ref{sec:lattice} under different outer stopping tolerances $\varepsilon_\phi$.
LR--F: fixed-accuracy low-rank SI--DSA uses the fixed inner tolerance
$\tau_{\min}=0.1\varepsilon_\phi$. LR--A: adaptively tightens the inner tolerance from
$\tau_{\max}=10^{-2}$ toward $\tau_{\min}$. All errors are measured relative
to the same high-accuracy full-rank SI-DSA generated with
$\varepsilon_\phi=10^{-8}$. Speedup stands for the mean speedup.}
\label{tab:lattice_si_cost}
\end{table}

\begin{table}[!htbp]
\centering

\scriptsize

\begin{tabular}{c l c c c c c}
\toprule
$\varepsilon_\phi$
& Method
& Time (s)
& Iter.
& \shortstack{Sweeps}
& Rel. Err. $\phi$
& Speedup
\\
\midrule

\multirow{3}{*}{$10^{-6}$}
& FR & 117.04 & 6 & 3584
& $4.13\times10^{-8}$ & 1.00 \\
& LR--F & $89.94\pm2.87$ & $6.0\pm0.0$
& $1556.2\pm14.1$
& $(1.45\pm2.07)\times10^{-7}$ & 1.30 \\
& LR--A & $\mathbf{54.95\pm2.55}$ & $6.0\pm0.0$
& $907.6\pm20.5$
& $(3.20\pm1.64)\times10^{-7}$ & $\mathbf{2.13}$ \\

\midrule
\multirow{3}{*}{$10^{-5}$}
& FR & 118.64 & 6 & 3584
& $4.13\times10^{-8}$ & 1.00 \\
& LR--F & $75.95\pm1.17$ & $6.0\pm0.0$
& $1364.1\pm14.4$
& $(8.75\pm5.69)\times10^{-7}$ & 1.56 \\
& LR--A & $\mathbf{43.21\pm0.97}$ & $6.0\pm0.0$
& $712.0\pm12.3$
& $(1.95\pm0.45)\times10^{-6}$ & $\mathbf{2.75}$ \\
\midrule

\multirow{3}{*}{$10^{-4}$}
& FR & 101.74 & 5 & 3072
& $6.50\times10^{-7}$ & 1.00 \\
& LR--F & $54.46\pm1.22$ & $5.0\pm0.0$
& $990.5\pm13.4$
& $(5.50\pm3.02)\times10^{-6}$ & 1.87 \\
& LR--A & $\mathbf{31.45\pm0.73}$ & $5.0\pm0.0$
& $514.6\pm11.2$
& $(1.84\pm0.34)\times10^{-5}$ & $\mathbf{3.23}$ \\

\midrule

\multirow{3}{*}{$10^{-3}$}
& FR
& 83.97
& 4
& 2560
& $9.82\times10^{-6}$
& 1.00
\\

& LR--F
& $33.13 \pm 1.06$
& $4.0 \pm 0.0$
& $617.4 \pm 16.3$
& $(6.04\pm2.27)\times10^{-5}$
& 2.53
\\

& LR--A
& $\mathbf{20.78 \pm 0.57}$
& $4.0 \pm 0.0$
& $337.5 \pm 6.7$
& $(1.59\pm0.27)\times10^{-4}$
& $\mathbf{4.04}$
\\

\bottomrule
\end{tabular}%
\caption{Accuracy and computational performance of FGMRES--DSA for the
lattice problem in Sec. \ref{sec:lattice} under different outer stopping tolerances
$\varepsilon_\phi$. LR--F: fixed-accuracy low-rank FGMRES-DSA uses the fixed inner tolerance
$\tau_{\min}=0.1\varepsilon_\phi$. LR--A: adaptively relaxes the
inner tolerance from $\tau_{\min}$ toward $\tau_{\max}=10^{-2}$.
All errors are measured relative to the same high-accuracy full-rank
reference solution generated by SI-DSA with stopping criterion $\|\bphi^{(n)}-\bphi^{(n-1)}\|_\infty\leq\varepsilon_\phi=10^{-8}$. Speedup stands for the mean speedup. Sweep counts include those used
for both relaxed construction and Krylov matrix--vector products.}
\label{tab:lattice_fgmres_cost}
\end{table}

\begin{table}[!htbp]
\centering
\resizebox{\textwidth}{!}{
\begin{tabular}{llrrrrr}
\toprule
& Method
& Time (s)
& Speedup
& $r_{\mathrm{eff}}/N_{\BOmega}$
& Sweep red.
& Rel. Err. $\phi$ \\
\midrule

CL($16,8$)
& FR
& 91.89
& 1.00
& --
& --
& $1.81\times10^{-2}$ \\

& LR--A
& $42.01\pm1.54$
& $2.19$
& 0.50
& $2.06\times$
& $1.81\times10^{-2}\pm4.20\times10^{-9}$ \\

\midrule

CL($24,12$)
& FR
& 112.36
& 1.00
& --
& --
& $7.88\times10^{-3}$ \\

& LR--A
& $72.43\pm18.23$
& $1.55$
& 0.49
& $2.31\times$
& $7.88\times10^{-3}\pm7.40\times10^{-9}$ \\

\midrule

CL($32,16$)
& FR
& 184.69
& 1.00
& --
& --
& $3.66\times10^{-3}$ \\

& LR--A
& $99.20\pm2.47$
& $1.86$
& 0.44
& $3.15\times$
& $3.66\times10^{-3}\pm3.10\times10^{-10}$ \\

\midrule

CL($40,20$)
& FR
& 279.16
& 1.00
& --
& --
& $1.56\times10^{-3}$ \\

& LR--A
& $147.66\pm8.52$
& $1.89$
& 0.38
& $3.79\times$
& $1.56\times10^{-3}\pm2.40\times10^{-9}$ \\

\midrule

CL($48,24$)
& FR
& 398.99
& 1.00
& --
& --
& -- (ref.) \\

& LR--A
& $232.86\pm14.16$
& $1.71$
& 0.34
& $4.54\times$
& $(6.25\pm6.57)\times10^{-8}$ \\

\bottomrule
\end{tabular}
}
\caption{
Accuracy and computational performance of FR-SI-DSA and LR--SI--DSA with adaptive control for the lattice
problem in Sec.~\ref{sec:lattice} under increasing angular resolution.
We set the outer convergence tolerance as $\varepsilon_{\phi}=10^{-6}$, minimum and maximum accuracy relaxation factors as
$\tau_{\min}=10^{-7}$ and $\tau_{\max}=10^{-2}$.
The compression ratio $r/N_{\BOmega}$ is the effective solution rank
divided by the number of angles.
All errors are relative to the full-rank reference solution at the finest
angular resolution with CL($48,24$) quarature.
Speedup denotes the mean speedup, and Sweep red. denotes the reduction
factor in total transport sweeps relative to FR at the same angular resolution.
\label{tab:angular_resolution_summary}
}
\end{table}

\begin{figure}[!htbp]
    \centering
    \includegraphics[width=0.45\textwidth]{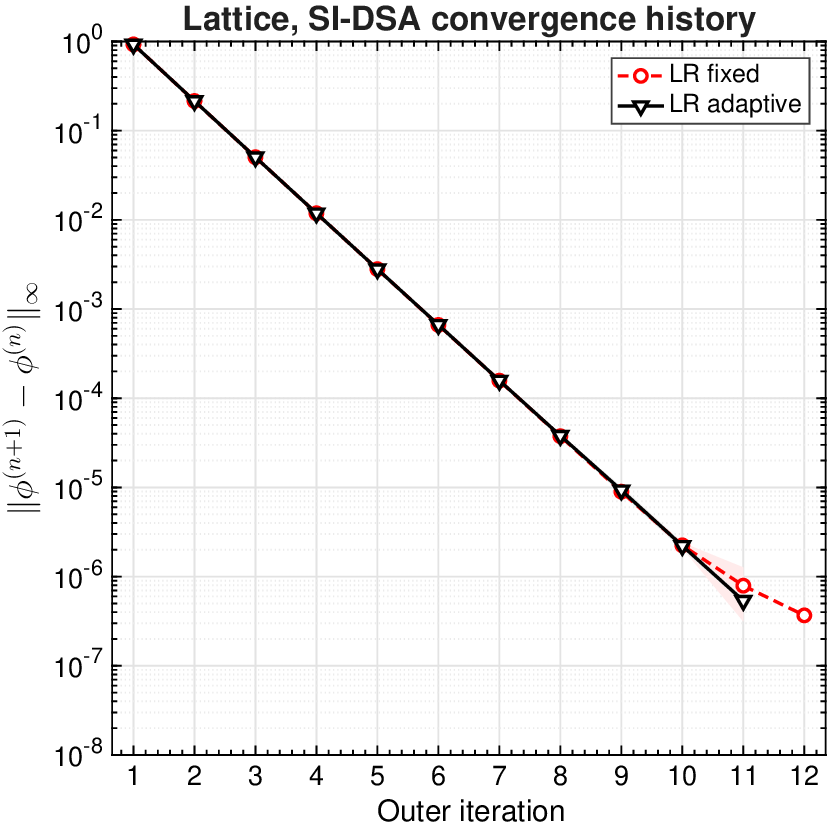}
    \includegraphics[width=0.45\textwidth]{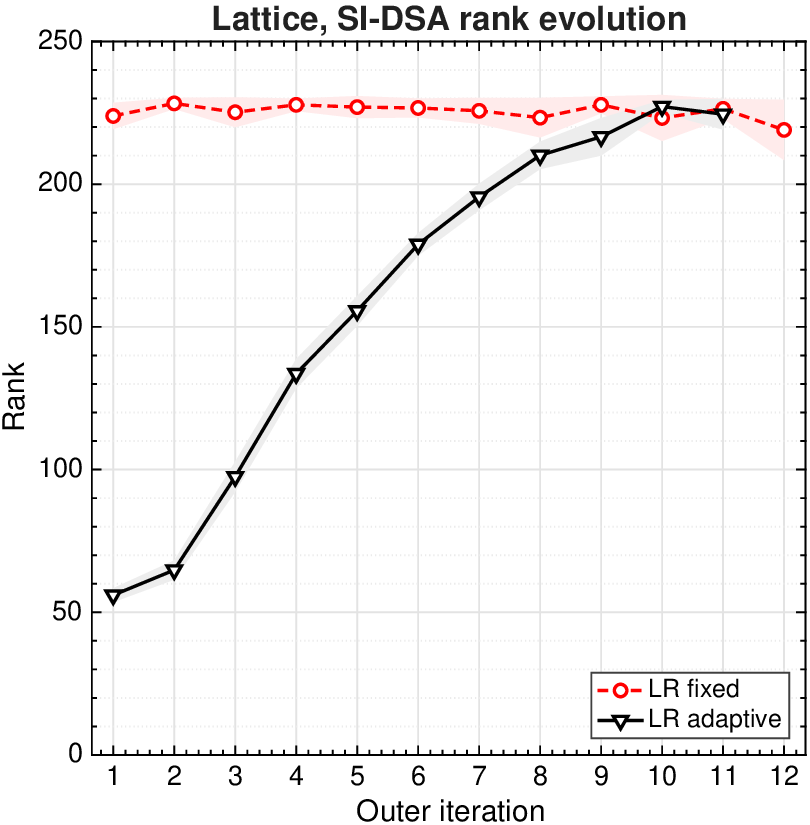}
    \includegraphics[width=0.45\textwidth]{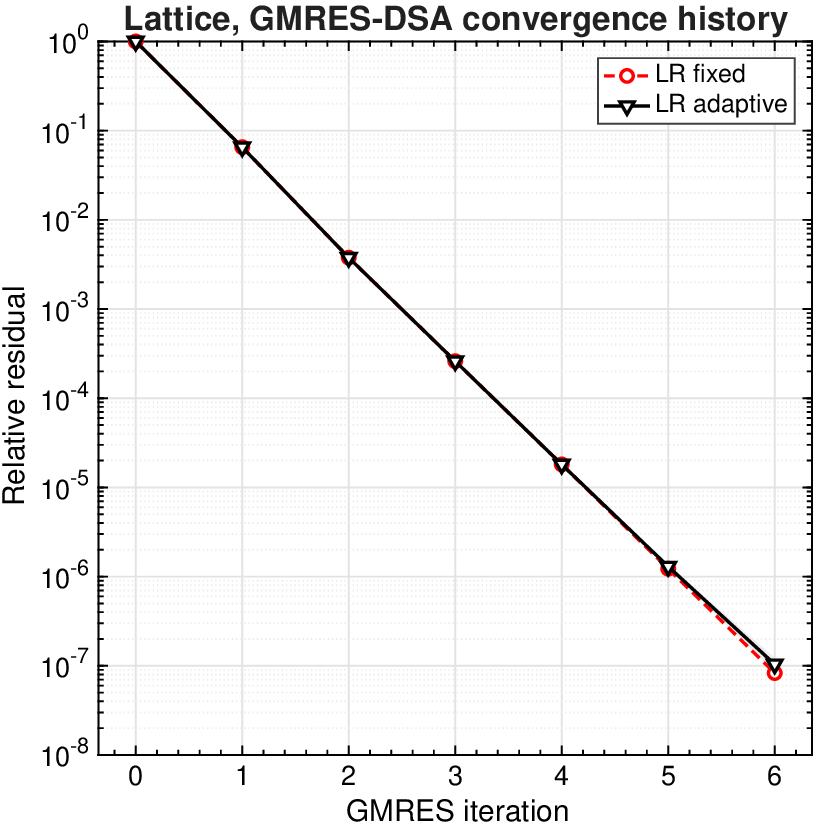}
    \includegraphics[width=0.45\textwidth]{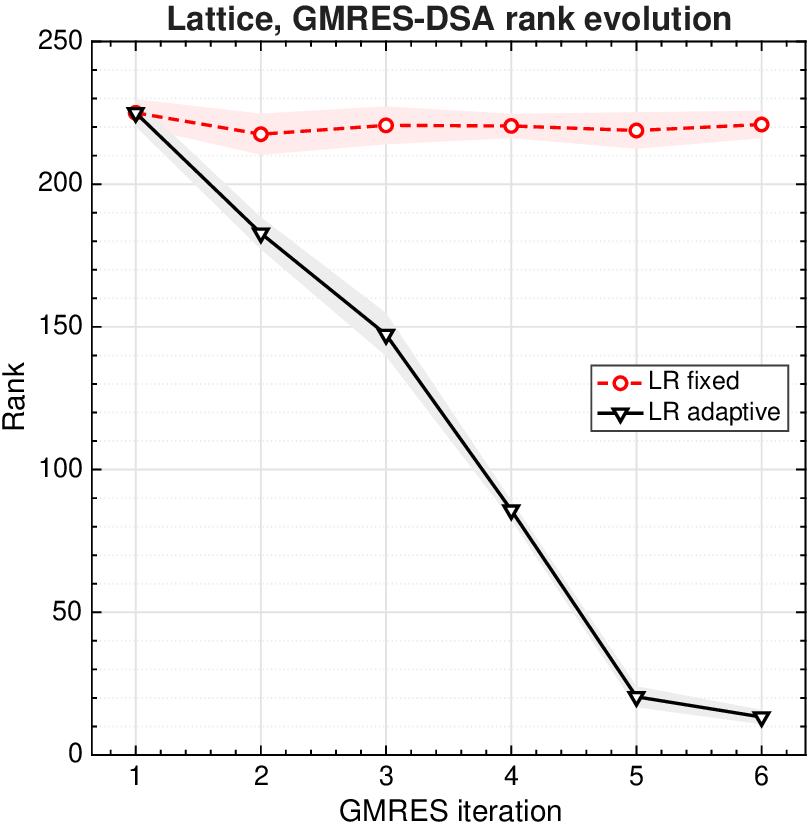}
      \caption{
Adaptive-accuracy behavior of LR--SI--DSA and LR--FGMRES--DSA
for the lattice problem with $\varepsilon_\phi=10^{-6}$.
Top left: convergence histories of fixed-accuracy and adaptive-accuracy
LR--SI--DSA. For all low-rank results, the plotted curves represent the mean over
ten random seeds, while the shaded regions indicate one standard
deviation about the mean.
Top right: corresponding effective-rank evolution.
Bottom left: relative-residual convergence histories of fixed-accuracy
and adaptive-accuracy LR--FGMRES--DSA.
Bottom right: corresponding effective-rank evolution.
}
\label{fig:lattice_adaptive_summary}
\end{figure}

\subsection{Pin-cell problem}
\label{sec:pin-cell}
We consider a  pin-cell problem to test the two-stage hybrid LR-SI-DSA combining DEIM and randomized greedy sampling, and compare it against the proposed LR-FGMRES-DSA. The computational domain is
$\mathcal{D}=[-1,1]^2$, with zero inflow boundary
conditions and no absorption. The source term and the scattering cross section are
defined as
\begin{equation}
G(x,y)=\exp(-100(x^2+y^2)),\quad
\sigma_s(x,y)
=
\begin{cases}
0.1,
& |x|\leq 0.5,\quad |y|\leq 0.5,
\\[2mm]
100,
& \text{otherwise}.
\end{cases}
\label{eq:pincell_sigma}
\end{equation}
Thus, the  central region is transport-dominated and surrounded by a diffusive medium whose scattering strength is $1000$ times that of the central region.
We use a uniform $80\times80$ Cartesian spatial mesh and the CL$(32,16)$ quadrature in angular space.  The material configuration and a full-rank reference scalar flux are presented in Fig. \ref{fig:pin-cell}. In this test, we  we set
$\tau_{\max}=10^{-3}$ and
 $\tau_{\min}=10^{-7}$ in the adaptive accuracy control.

\textbf{Two-stage hybrid LR-SI-DSA.}
Following Sec. \ref{sec:hybrid-deim-greedy}, we first apply LR-SI-DSA with adaptive accuracy test based on randomized greedy sampling in Alg. \ref{alg:lr-si}, and then switch to the
DEIM-based DLR solver in Sec. \ref{sec:deim-lr-si} once the prescribed switching criterion 
$\|\bphi^{(n)}-\bphi^{(n-1)}\|_{\infty}\leq\varepsilon_{\mathrm{switch}}$   is
satisfied.  We vary $\varepsilon_{\mathrm{switch}}$ over $10^{-2}$,
$5\times10^{-3}$, $5\times10^{-4}$, $10^{-4}$, and $10^{-5}$ and set
$\varepsilon_{\mathrm{DEIM-SI}}=10^{-6}$ in
\eqref{eq:inner-deim-stop}. The results are summarized in
Tab.~\ref{tab:pincell_hybrid}. 

With $\varepsilon_{\mathrm{switch}}=10^{-2},5\times10^{-3}$, the method switches after
only $3$ or $4$ randomized-greedy SI steps, before the angular basis becomes
sufficiently representative. As shown in Fig. \ref{fig:pin-cell}, the DEIM stage subsequently stagnates and fails
to reach the prescribed tolerance within the maximum number of outer
iterations. With $\varepsilon_{\mathrm{switch}}$  $=10^{-5}$, the efficiency of the hybrid method is slightly faster than the LR-SI-DSA with adaptive accuracy control but only randomized greedy sampling. By contrast, using $\varepsilon_{\mathrm{switch}}=5\times10^{-4},10^{-4}$ in the hybrid approach leads to additional accelerations over their counterparts with adaptive-accuracy control.

The convergence histories of the hybrid method in Fig. \ref{fig:pin-cell} exhibit a kink immediately after switching. It reflects a mismatch of the two sweep-based LR-SI:
randomized-greedy LR--SI produces an intermediate SI update (see line 4 of Alg. \ref{alg:lr-si}), whereas the
DEIM-based approach solves a perturbed intermediate $S_N$ system to its inner convergence tolerance (see equation \eqref{eq:deim_column_sweep}).

In summary, switching too early can substantially increase the SI iterations, while
switching too late limits the benefit of DEIM. The hybrid method with a properly chosen threshold,
balancing these effects, can achieve modest accelerations.

\textbf{Robustness and efficiency of adaptively relaxed LR--GMRES.}
Tab.~\ref{tab:pincell_fgmres} compares FR--GMRES with its fixed-accuracy and
adaptively relaxed low-rank variants. Both LR variants preserve the
$9$-iteration convergence of FR--GMRES for every random seed.
Fixed-accuracy LR--GMRES reduces the runtime from $171.84$ to $129.13$
seconds, yielding a $1.33\times$ speedup. Adaptive relaxation further reduces
the runtime to $70.65$ seconds, corresponding to speedups of $1.82\times$
over fixed-accuracy LR--GMRES and $2.43\times$ over FR--GMRES, while
maintaining scalar-flux errors of order $10^{-7}$.

\textbf{Comparison between LR-GMRES and LR-SI.}
As shown in Tabs.~\ref{tab:pincell_hybrid} and
\ref{tab:pincell_fgmres}, LR--GMRES with adaptive accuracy control requires $9$ Krylov
iterations and $70.65$ seconds, compared with $21.7$ outer iterations and
$241.18$ seconds for LR--SI with adaptive accuracy control. It is also approximately
$3.02\times$ faster than the best two-stage hybrid LR--SI result of
$213.48$ seconds. Thus, adaptively relaxed LR--GMRES is the most robust and
efficient method tested for this problem.

\begin{table}[!htbp]
\centering

\small
\renewcommand{\arraystretch}{1.08}

\resizebox{\textwidth}{!}{%
\begin{tabular}{l c c c c c c c c}
\toprule
&&&
\multicolumn{3}{c}{SI steps}
&&&
\\
\cmidrule(lr){4-6}
Method
& $\varepsilon_{\mathrm{switch}}$
& Time (s)
& RG
& DEIM
& Total
& Sweeps
& Rel. Err. $\phi$
& Speedup
\\
\midrule

FR
& --
& 362.86
& --
& --
& 21
& 10752
& --
& 1.00
\\

LR--F
& --
& $263.80 \pm 8.05$
& $21.2$
& --
& $21.2 \pm 0.6$
& $4574.2 \pm 124.4$
& $(1.04\pm1.05)\times10^{-7}$
& 1.38
\\

LR--A
& --
& $241.18 \pm 17.32$
& $21.7$
& --
& $21.7 \pm 1.3$
& $4193 \pm 289$
& $(1.55 \pm 1.24)\times 10^{-7}$
& 1.50
\\

\midrule

LR--H
& $10^{-2}$
& $935.64 \pm 96.87$
& 3.0
& Stagnate
& Stagnate
& --
& $(2.12\pm1.42)\times10^{-5}$
& --
\\

LR--H
& $5\times10^{-3}$
& $770.61 \pm 87.03$
& 4.0
& Stagnate
& Stagnate
& --
& $(6.32\pm7.01)\times10^{-6}$
& --
\\

LR--H
& $5\times10^{-4}$
& $238.52 \pm 40.26$
& 8.5
& 20.1
& $28.6 \pm 5.2$
& $5259.4 \pm 979.9$
& $(8.22\pm3.34)\times10^{-7}$
& 1.52
\\

LR--H
& $10^{-4}$
& $\mathbf{213.48 \pm 13.86}$
& 12.0
& 11.8
& $23.8 \pm 1.9$
& $4562.5 \pm 318.4$
& $(3.92\pm0.41)\times10^{-7}$
& $\mathbf{1.70}$
\\

LR--H
& $10^{-5}$
& $229.97 \pm 4.24$
& 17.0
& 6.4
& $23.4 \pm 0.8$
& $4522.2 \pm 172.9$
& $(3.87\pm0.18)\times10^{-7}$
& 1.58
\\

\bottomrule
\end{tabular}%
}
\caption{Results for the pin-cell problem using FR--SI--DSA and the proposed
low-rank variants. 
LR--A:  adaptively relaxed LR-SI-DSA with randomized greedy sampling.
LR--H: the
two-stage hybrid LR-SI-DSA.
For LR--H, ``RG'' and ``DEIM'' report the mean numbers of SI steps before and
after switching, respectively, while ``Total'' reports their sum as the mean
$\pm$ standard deviation.}
\label{tab:pincell_hybrid}
\end{table}
\begin{table}[!htbp]
\centering
\scriptsize
\begin{tabular}{l c c c c c}
\toprule
Method
& Time (s)
& Iter.
& Transport sweeps
& Rel. Err. $\phi$
& Mean Speedup \\
\midrule

FR
& 171.84
& 9
& 5120
& --
& 1.00 \\

LR--F
& 129.13 $\pm$ 3.74
& 9.0 $\pm$ 0.0
& 2214.6 $\pm$ 17.1
& $(2.09\pm1.58)\times10^{-7}$
& 1.33 \\

LR--A
& \textbf{70.65 $\pm$ 1.13}
& 9.0 $\pm$ 0.0
& 1226.2 $\pm$ 14.7
& $(4.58\pm2.07)\times10^{-7}$
& \textbf{2.43} \\
\bottomrule
\end{tabular}%
\caption{Results for the pin-cell problem in Sec.~\ref{sec:pin-cell} using
full-rank, fixed-accuracy low-rank, and adaptively relaxed
low-rank FGMRES--DSA. Sweep counts include those used
for both relaxed construction and Krylov matrix--vector products.}
\label{tab:pincell_fgmres}
\end{table}
\begin{figure}[!htbp]
    \centering
    \includegraphics[width=0.48\textwidth]{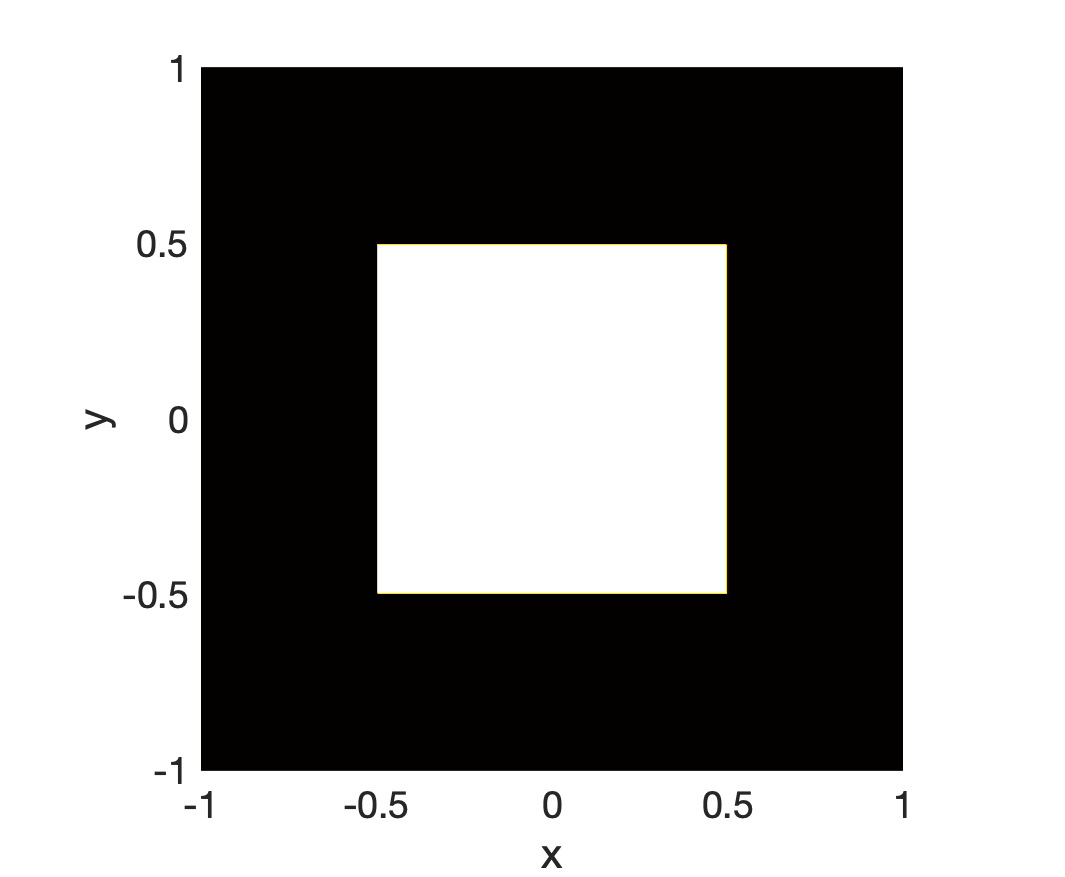}
    \includegraphics[width=0.48\textwidth]{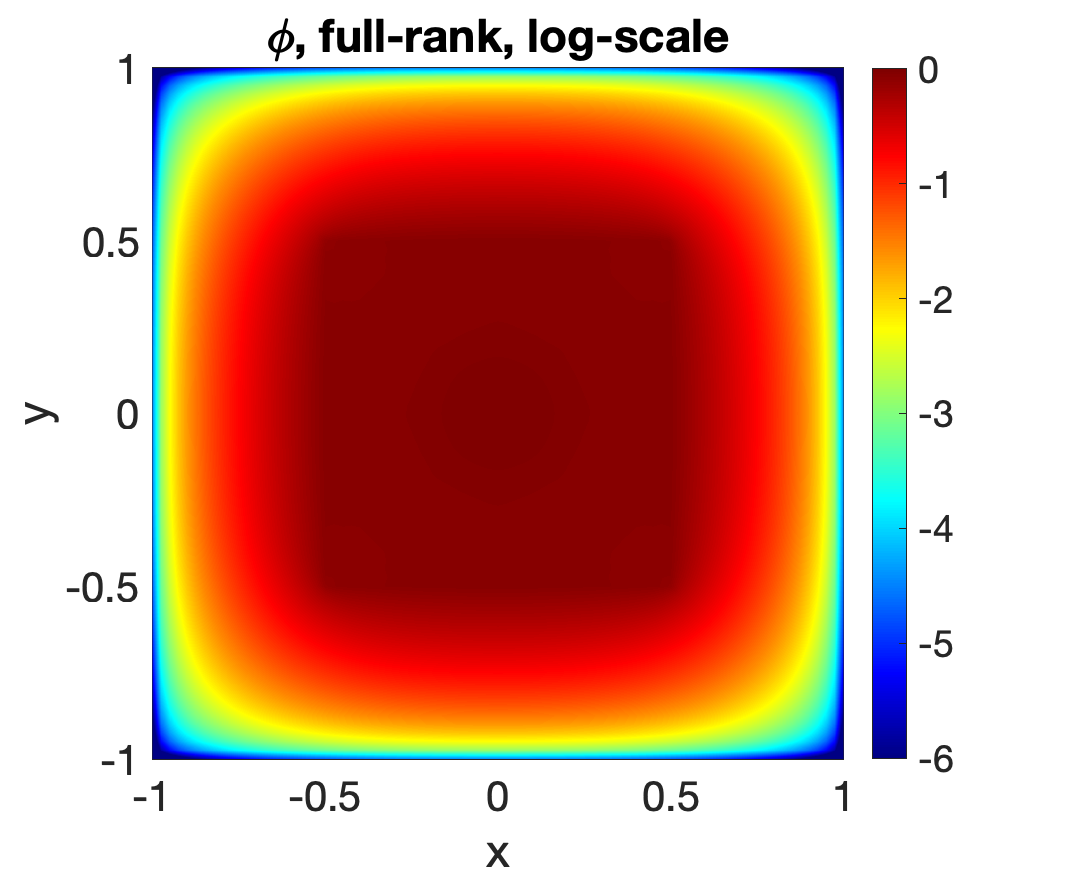}
    \includegraphics[width=0.45\textwidth]{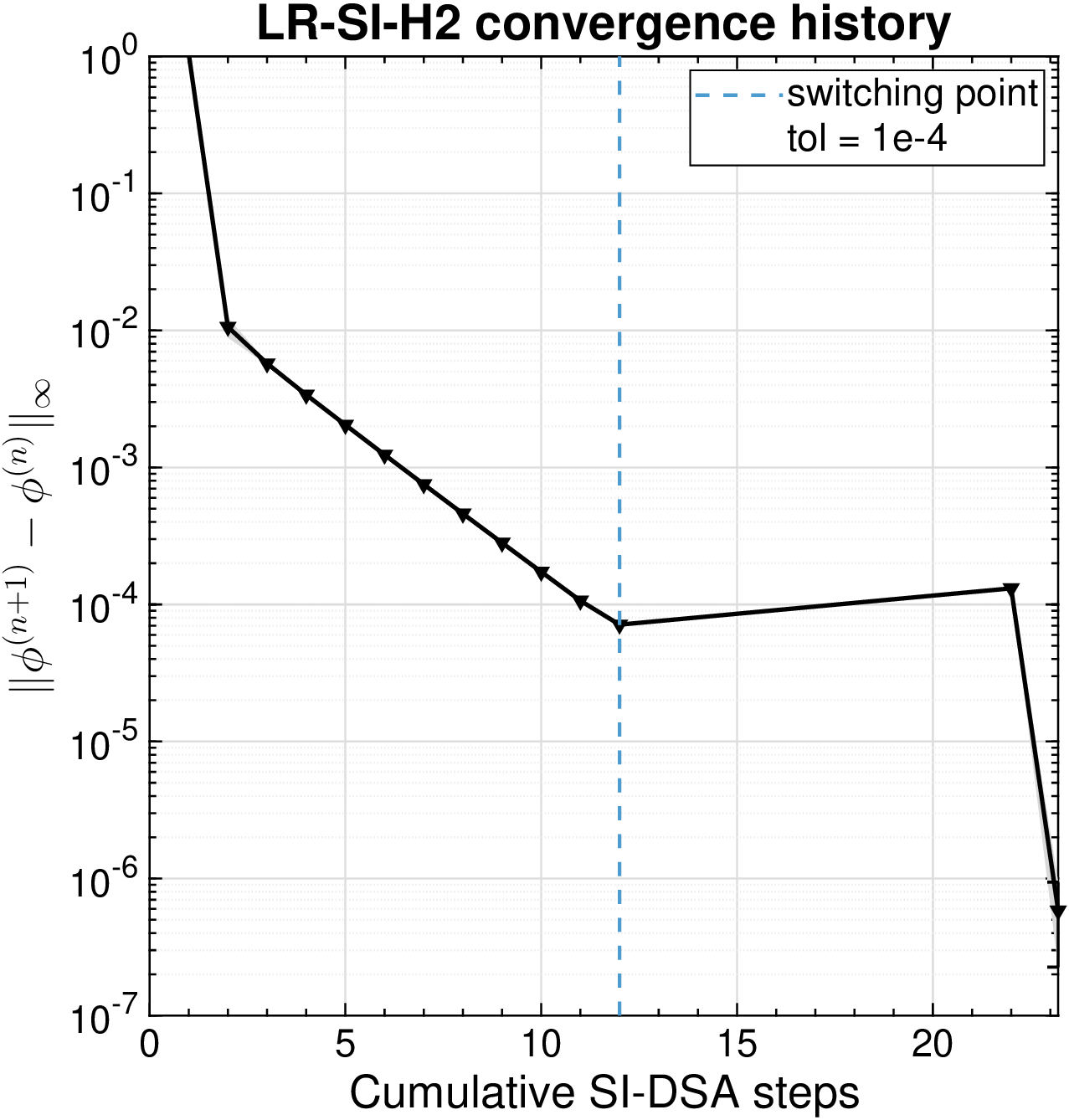}
    \includegraphics[width=0.45\textwidth]{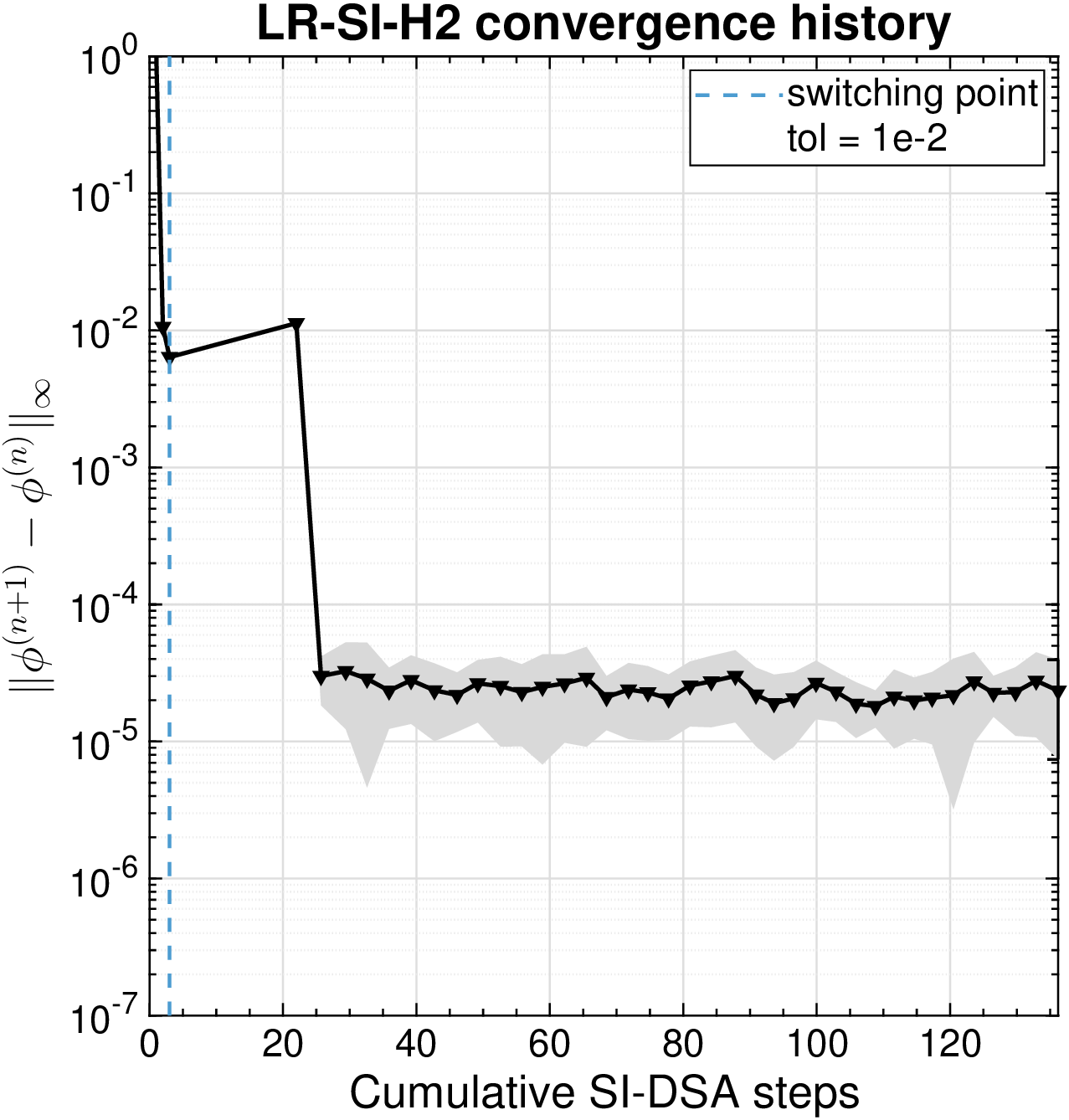}
      \caption{Pin-cell problem in Sec.~\ref{sec:pin-cell}. For all low-rank results, the plotted curves represent the mean over
ten random seeds, while the shaded regions indicate one standard
deviation about the mean. Top left: material configuration. Top right: reference scalar flux under log-scale. Bottom left and right: convergence history for the two-stage hybrid LR--SI--DSA with $\varepsilon_{\mathrm{switch}}=10^{-4}$ (left) and $\varepsilon_{\mathrm{switch}}=5\times10^{-3}$ (right).
}  
    \label{fig:pin-cell}
\end{figure}


\section{Conclusion}
\label{sec:conclusion}
We have developed two complementary approaches to improve sweep-based low-rank iterative solvers for steady-state RTEs. First, building on randomized-greedy low-rank SI--DSA \cite{Guo2026HighlyER}, we have introduced adaptive inner accuracy control to reduce intermediate ranks and sampling costs. A hybrid sampling strategy provides further acceleration by switching from randomized greedy sampling during early iterations to a steady-state extension of the DEIM-based DLR method \cite{haut2026efficient} near convergence.
Second, to address the reduced effectiveness of SI--DSA in multiscale problems with
strong material discontinuities, we developed an inexact FGMRES--DSA solver
for the integral scalar-flux reformulation of RTE. Sweep-based low-rank approximations via randomized greedy angular sampling are applied to efficiently construct the right-hand side and perform matrix--vector products.
By confining low-rank approximations in these operations, the
scalar-flux Krylov basis is not truncated and remains orthogonal in exact
arithmetic, enabling extension of inexact-GMRES theory \cite{simoncini2003theory} to guide the adaptive
accuracy control.

The numerical results demonstrate that adaptive accuracy control reduces angular sampling and intermediate ranks while maintaining the requested solution accuracy and largely preserving the full-rank convergence behavior, yielding greater speedups over full-rank solvers than fixed-accuracy low-rank approximations. For low-rank SI--DSA, these gains depend on choosing the maximum relaxation threshold to avoid additional outer iterations. The two-stage hybrid method provides modest further savings when the angular basis is sufficiently representative at the switch. Low-rank FGMRES--DSA is more robust to aggressive relaxation and consistently outperforms both its fixed-accuracy low-rank and full-rank counterparts. For the challenging pin-cell and lattice problems, adaptively relaxed low-rank FGMRES--DSA is also substantially more efficient than its SI--DSA counterpart.


\section*{Acknowledgments}
Z. Peng was partially supported by the Hong Kong Research Grants Council grants Early Career Scheme 26302724 and General Research Fund 16306825, 16307226. During the preparation of this work the author(s) used ChatGPT and Claude to check grammar errors,  improve readability and ChatGPT to assist code implementation. After using this tool, the authors reviewed and edited the content as needed and take full responsibility for the content of the published article.

\appendix

\section{Chebyshev-Gauss angular quadrature}\label{app:quadrature}
We use a tensor-product Chebyshev--Gauss quadrature on the sphere, given by
\begin{equation}
  \BOmega_j
  = \left(
      \cos\theta_{j_1}\sqrt{1-\mu_{j_2}^2},
      \sin\theta_{j_1}\sqrt{1-\mu_{j_2}^2},
      \mu_{j_2}
    \right),
  \qquad
  \omega_j=\omega_{\theta,j_1}\omega_{\mu,j_2}.
\end{equation}
\section{DEIM-based angle sampling\label{app:deim-sampling-alg}}
\begin{algorithm}[H]
\caption{DEIM-based angle sampling.\label{alg:deim-sampling}}
\begin{algorithmic}[1]
\STATE{\textbf{Input}: Angular basis
$\BV^{(n)}
=[\,\bv_1^{(n)},\ldots,\bv_r^{(n)}\,]
\in\mathbb{R}^{N_\Omega\times r}$.}
\STATE{\textbf{Initialization}: Select
$p_1:=\underset{1\leq p\leq N_\Omega}{\arg\max}\,
\left|(\bv_1^{(n)})_p\right|,
\qquad
\mathcal{I}:=\{p_1\}.$}
\FOR{$j=2,3,\dots,r$}
\STATE{Set
$\BV_{j-1}^{(n)}
:=[\,\bv_1^{(n)},\ldots,\bv_{j-1}^{(n)}\,]$
and
$\BPi_{j-1}:=[\,\be_{p_1},\ldots,\be_{p_{j-1}}\,]$.}
\STATE{\textbf{Interpolation}: Solve
$\big(\BPi_{j-1}^{T}\BV_{j-1}^{(n)}\big)\bc^{(j-1)}
=
\BPi_{j-1}^{T}\bv_j^{(n)}.
$}
\STATE{\textbf{Residual}: Compute
$\br_j
:=
\bv_j^{(n)}-\BV_{j-1}^{(n)}\bc^{(j-1)}.$}
\STATE{\textbf{Angle selection}: Select
$p_j:=
\underset{1\leq p\leq N_\Omega}{\arg\max}\,
|(\br_j)_p|,
\qquad
\mathcal{I}:=\mathcal{I}\cup\{p_j\}.
$}
\ENDFOR
\STATE{\textbf{Output}: Sampled-angle index set
$\mathcal{I}=\{p_1,\ldots,p_r\}$, corresponding to
$\BOmega_{p_1},\ldots,\BOmega_{p_r}$.}
\end{algorithmic}
\end{algorithm}

\section{Proof of Theorem~\ref{thm:inexact-richardson}}\label{app:proof-inexact-richardson}

\begin{proof}[Proof of Theorem~\ref{thm:inexact-richardson}]
Define the error by
$\be^{(n)}:=\bx^{(n)}-\bx^*.$
Since $\BA\bx^*=\bb$, the inexact iteration can be written as
\begin{align}
    \be^{(n+1)}
    &=
    \be^{(n)}
    +
    \alpha\BM^{-1}
    \left(
        \bb-\BA\bx^{(n)}+\widetilde{\br}^{(n)}-\br^{(n)}
    \right)
    \notag\\
    &=
    \left(
        \BI-\alpha\BM^{-1}\BA
    \right)
    \be^{(n)}
    +
    \alpha\BM^{-1}(\widetilde{\br}^{(n)}-\br^{(n)}).
\end{align}
Taking the $\BM$-norm gives
\begin{equation}
    \|
        \be^{(n+1)}
   \|_{\BM}
    \leq
    \rho
   \|
        \be^{(n)}
    \|_{\BM}
    +
    \alpha
    \|
        \widetilde{\br}^{(n)}-\br^{(n)}
    \|_{\BM^{-1}}.
    \label{eq:M-norm-relation}
\end{equation}
By \eqref{eq:relative-perturbation-condition},
\begin{align}
    \|
        \widetilde{\br}^{(n)}-\br^{(n)}
    \|_{\BM^{-1}}
    &\leq
    \eta^{(n)}
    \|
        \br^{(n)}
    \|_{\BM^{-1}}
    =
    \eta^{(n)}
    \|
        \BA\be^{(n)}
    \|_{\BM^{-1}}
    \notag\\
    &=
    \eta^{(n)}
    \|
        \BM^{-1}\BA\be^{(n)}
    \|_{\BM}
    \leq
    \eta^{(n)}K
    \|
        \be^{(n)}
   \|_{\BM}.
\end{align}
Substituting this estimate into \eqref{eq:M-norm-relation},
\[
    \|
        \be^{(n+1)}
    \|_{\BM}
    \leq
    \left(
        \rho+\alpha K\eta^{(n)}
    \right)
    \|
        \be^{(n)}
    \|_{\BM}.
\]
Hence,
$\eta^{(n)}
    \leq
    \overline{\eta}
    <
    \frac{1-\rho}{\alpha K}$ implies
$\rho+\alpha K\overline{\eta}<1,$
and the iteration remains contractive.
\end{proof}

\section{Proof of Theorem~\ref{thm:residual-gap} \label{app:gmres-proof}}
\begin{proof}[Proof of Theorem~\ref{thm:residual-gap}]
The argument adapts the residual-gap analysis in Proposition~4.1,
Lemma~5.1, and Theorem~5.3 of \cite{simoncini2003theory}
to flexible GMRES with a perturbed right-hand side
and an inexact initial matrix--vector product.
By assumption, 
\[
\bepsilon_0
:=
\BA_{\mathrm{lr}}(\bphi^{(0)};\tau_0)-\BA\bphi^{(0)}, \quad \|\bepsilon_0\|_2\leq\varepsilon_0,\quad
\widetilde{\bb}-\BA\bphi^{(0)}
=
\br^{(0)}+\bepsilon_0
=
\beta\bq^{(1)}+\bepsilon_0.
\]
Recall that
\[
\BA\BZ_m
=
\BQ_{m+1}\overline{\BH}_m-\BE_m,\;
\bphi^{(m)}=\bphi^{(0)}+\BZ_m\by^{(m)}.
\]
The true residual satisfies
\begin{align}
\br_{\mathrm{true}}^{(m)}
&=
\bb-\BA\bphi^{(m)}
=
\bb-\widetilde{\bb}
+
\widetilde{\bb}-\BA(\bphi^{(0)}+\BZ_m\by^{(m)})
\notag\\
&=\bb-\widetilde{\bb}+(\widetilde{\bb}-\BA\bphi^{(0)})-\BA\BZ_m\by^{(m)}
\notag\\
&=
\bb-\widetilde{\bb}
+
\bepsilon_0
+
\beta\bq^{(1)}
-
\left(
\BQ_{m+1}\overline{\BH}_m-\BE_m
\right)\by^{(m)}
\notag\\
&=
\bb-\widetilde{\bb}
+
\bepsilon_0
+
\BQ_{m+1}
\left(
\beta\be_1-\overline{\BH}_m\by^{(m)}
\right)
+
\BE_m\by^{(m)}
\notag\\
&=
\bb-\widetilde{\bb}
+
\bepsilon_0
+
\br^{(m)}
+
\BE_m\by^{(m)}.
\end{align}
Therefore,
\begin{equation}
\br_{\mathrm{true}}^{(m)}-\br^{(m)}
=
\bb-\widetilde{\bb}
+
\bepsilon_0
+
\BE_m\by^{(m)}.
\label{eq:app_residual_gap_identity}
\end{equation}

Though Lemma~5.1 of \cite{simoncini2003theory} is stated for
inexact GMRES, its proof depends only on the nested upper-Hessenberg
least-squares problems. The FGMRES least-squares problems have the
same structure; the flexible basis $\BZ_m$ does not enter the
coefficient estimate. Hence, the same argument gives the $k$-th element of $\by^{(m)}$ satisfying
\begin{equation}
\left|
\left(\by^{(m)}\right)_k
\right|
\leq
\frac{
\|\br^{(k-1)}\|_2
}{
\sigma_m(\overline{\BH}_m)
},
\qquad k=1,\ldots,m.
\label{eq:app_y_component_bound}
\end{equation}
Here, the orthonormality of $\BQ_k$ implies that
\[
\|\br^{(k-1)}\|_2
=
\left\|
\beta\be_1-\overline{\BH}_{k-1}\by^{(k-1)}
\right\|_2,
\qquad k=2,\ldots,m,
\]
while $\|\br^{(0)}\|_2=\beta$.
Thus, $\br^{(k-1)}$ is precisely the computed residual appearing
in the coefficient estimate.
It follows from \eqref{eq:app_y_component_bound} that
\begin{align}
\|\BE_m\by^{(m)}\|_2
&=
\|
\sum_{k=1}^m
\bepsilon_k\left(\by^{(m)}\right)_k
\|_2
\leq
\sum_{k=1}^m
\|\bepsilon_k\|_2
\left|
\left(\by^{(m)}\right)_k
\right|\notag\\
&\leq
\frac{1}{\sigma_m(\overline{\BH}_m)}
\sum_{k=1}^m
\|\bepsilon_k\|_2
\|\br^{(k-1)}\|_2.
\end{align}
Applying condition~\eqref{eq:gmres-error-constraint} to each term gives
$\|\BE_m\by^{(m)}\|_2
\leq
\varepsilon_{\mathrm{gap}}-2\varepsilon_0.$
Finally, using \eqref{eq:app_residual_gap_identity},
$\|\bb-\widetilde{\bb}\|_2\leq\varepsilon_0$,
and $\|\bepsilon_0\|_2\leq\varepsilon_0$, we obtain
\begin{align}
\|
\br_{\mathrm{true}}^{(m)}-\br^{(m)}
\|_2
&\leq
\|\bb-\widetilde{\bb}\|_2
+
\|\bepsilon_0\|_2
+
\|\BE_m\by^{(m)}\|_2
\notag\\
&\leq
2\varepsilon_0
+
\left(
\varepsilon_{\mathrm{gap}}-2\varepsilon_0
\right)
=
\varepsilon_{\mathrm{gap}}.
\end{align}
\end{proof}
\section{Notations\label{sec:notation}}
\begin{table}[H]
\centering
\footnotesize
\setlength{\tabcolsep}{4pt}
\begin{tabular}{@{}ll@{}}
\toprule
\multicolumn{2}{@{}l}{\emph{Discretization}}\\
$N_{\bx}$, $N_\Omega$ & numbers of spatial DOFs and quadrature directions\\
$\BOmega_j$, $\omega_j$ & $j$-th quadrature direction and its normalized weight\\
$\bw$, $\BD_\Omega$ & weight vector $(\omega_1,\ldots,\omega_{N_\Omega})^T$ and $\operatorname{diag}(\bw)$\\
$\bpsi_j$, $\bphi$, $\BPsi$ & discrete angular flux along $\BOmega_j$; discrete scalar flux; $\BPsi=[\,\bpsi_1,\ldots,\bpsi_{N_\Omega}]\in\mathbb{R}^{N_{\bx}\times N_\Omega}$\\
$\bG$, $\bg_j^{\mathrm{bc}}$ & fixed source; inflow-boundary contribution for $\BOmega_j$\\
\midrule
\multicolumn{2}{@{}l}{\emph{Operators}}\\
$\BD_j$, $\BSigma_t$, $\BSigma_s$ & discrete streaming, total and scattering operators\\
$\BA_{\mathrm{diff}}$ & discrete diffusion operator of the DSA correction\\
$\BM_{\mathrm{DSA}}$ & DSA preconditioner, $\BM_{\mathrm{DSA}}^{-1}=\BI+\BA_{\mathrm{diff}}^{-1}\BSigma_s$\\
$\BT$, $\bb$ & $\sum_j\omega_j(\BD_j+\BSigma_t)^{-1}$ and the right-hand side of integral formulation \eqref{eq:memory-efficient-reformulation}\\
$\widetilde{\BT}_{\mathrm{lr}}$, $\widetilde{\bb}$ & their sweep-based low-rank approximations\\
$\BA_{\mathrm{lr}}(\,\cdot\,;\tau)$ & inexact matrix--vector product \eqref{eq:lr-matvec-operator}\\
\midrule
\multicolumn{2}{@{}l}{\emph{Low-rank factors}}\\
$r$ & rank of the low-rank factorization\\
$\BX$, $\BS$, $\BV$, $\BC$ & spatial basis, core, angular basis, angular coefficients $\BPsi\approx\BX\BC=\BX\BS\BV^{T}$\\
$\BP^{(n,i)}$ & rank-$p$ increment of the spatial basis in Alg.~\ref{alg:lr-si}\\
$\BB_j$, $\BM_s$, $\bff_j$ & projected operators and source of \eqref{eq:deim_row_equation}\\
$\BL_j$, $\overline{\BL}$ & reduced angular coefficients and their quadrature average\\
\midrule
\multicolumn{2}{@{}l}{\emph{Krylov quantities} (Sec.~\ref{sec:ta-lr-fgmres})}\\
$\BQ_{m+1}$, $\bq^{(k)}$ & orthonormal Arnoldi basis and its $k$-th column\\
$\BZ_k$, $\bz^{(k)}$ & flexible (preconditioned) basis and its $k$-th column\\
$\overline{\BH}_k$, $\by^{(k)}$ & Hessenberg matrix and least-squares coefficients\\
$\BE_m$, $\bepsilon_k$ & matrix of matrix--vector product errors and its $k$-th column\\
\bottomrule
\end{tabular}
\caption{Notation used throughout the paper. Latin vectors and matrices are set
in upright bold, Greek ones in bold italic.}
\label{tab:notation}
\end{table}

\bibliographystyle{siam}
\bibliography{ref}

@article{Adams2002FastIM,
  title={{Fast iterative methods for discrete-ordinates particle transport calculations}},
  author={Marvin L. Adams and Edward W. Larsen},
  journal={Progress in Nuclear Energy},
  year={2002},
  volume={40},
  pages={3-159}
}

@article{larsen2009advances,
  title={{Advances in discrete-ordinates methodology}},
  author={Larsen, Edward W and Morel, Jim E},
  journal={Nuclear computational science: A century in review},
  pages={1--84},
  year={2009},
  publisher={Springer}
}

@article{guermond2010asymptotic,
  title={{Asymptotic analysis of upwind discontinuous Galerkin approximation of the radiative transport equation in the diffusive limit}},
  author={Guermond, Jean-Luc and Kanschat, Guido},
  journal={SIAM Journal on Numerical Analysis},
  volume={48},
  number={1},
  pages={53--78},
  year={2010},
  publisher={SIAM}
}

@article{adams2001discontinuous,
  title={{Discontinuous finite element transport solutions in thick diffusive problems}},
  author={Adams, Marvin L},
  journal={Nuclear science and engineering},
  volume={137},
  number={3},
  pages={298--333},
  year={2001},
  publisher={Taylor \& Francis}
}

@article{chen2009integrated,
  title={{An integrated finite element method package in MATLAB}},
  author={Chen, L},
  journal={California: University of California at Irvine},
  volume={10},
  year={2009}
}

@article{guo2022low,
  title={{A low rank tensor representation of linear transport and nonlinear Vlasov solutions and their associated flow maps}},
  author={Guo, Wei and Qiu, Jing-Mei},
  journal={Journal of Computational Physics},
  volume={458},
  pages={111089},
  year={2022},
  publisher={Elsevier}
}

@book{pomraning2005equations,
  title={{The equations of radiation hydrodynamics}},
  author={Pomraning, Gerald C},
  year={2005},
  publisher={Courier Corporation}
}

@article{birken2015termination,
  title={{Termination criteria for inexact fixed-point schemes}},
  author={Birken, Philipp},
  journal={Numerical Linear Algebra with Applications},
  volume={22},
  number={4},
  pages={702--716},
  year={2015},
  publisher={Wiley Online Library}
}

@article{bachmayr2023low,
  title={{Low-rank tensor methods for partial differential equations}},
  author={Bachmayr, Markus},
  journal={Acta Numerica},
  volume={32},
  pages={1--121},
  year={2023},
  publisher={Cambridge University Press}
}

@article{kressner2011low,
  title={{Low-rank tensor Krylov subspace methods for parametrized linear systems}},
  author={Kressner, Daniel and Tobler, Christine},
  journal={SIAM Journal on Matrix Analysis and Applications},
  volume={32},
  number={4},
  pages={1288--1316},
  year={2011},
  publisher={SIAM}
}

@article{peng2023sweep,
  title={{A sweep-based low-rank method for the discrete ordinate transport equation}},
  author={Peng, Zhuogang and McClarren, Ryan G},
  journal={Journal of Computational Physics},
  volume={473},
  pages={111748},
  year={2023},
  publisher={Elsevier}
}

@article{peng2021high,
  title={{A high-order/low-order (HOLO) algorithm for preserving conservation in time-dependent low-rank transport calculations}},
  author={Peng, Zhuogang and McClarren, Ryan G},
  journal={Journal of Computational Physics},
  volume={447},
  pages={110672},
  year={2021},
  publisher={Elsevier}
}

@article{peng2020low,
  title={{A low-rank method for two-dimensional time-dependent radiation transport calculations}},
  author={Peng, Zhuogang and McClarren, Ryan G and Frank, Martin},
  journal={Journal of Computational Physics},
  volume={421},
  pages={109735},
  year={2020},
  publisher={Elsevier}
}

@article{kusch2021robust,
  title={{A robust collision source method for rank adaptive dynamical low-rank approximation in radiation therapy}},
  author={Kusch, Jonas and Stammer, Pia},
  journal={arXiv preprint arXiv:2111.07160},
  year={2021}
}

@article{einkemmer2024asymptotic,
  title={{Asymptotic-preserving and energy stable dynamical low-rank approximation}},
  author={Einkemmer, Lukas and Hu, Jingwei and Kusch, Jonas},
  journal={SIAM Journal on Numerical Analysis},
  volume={62},
  number={1},
  pages={73--92},
  year={2024},
  publisher={SIAM}
}

@article{ding2021dynamical,
  title={{Dynamical low-rank integrator for the linear Boltzmann equation: error analysis in the diffusion limit}},
  author={Ding, Zhiyan and Einkemmer, Lukas and Li, Qin},
  journal={SIAM Journal on Numerical Analysis},
  volume={59},
  number={4},
  pages={2254--2285},
  year={2021},
  publisher={SIAM}
}

@article{kusch2023robust,
  title={{A robust collision source method for rank adaptive dynamical low-rank approximation in radiation therapy}},
  author={Kusch, Jonas and Stammer, Pia},
  journal={ESAIM: Mathematical Modelling and Numerical Analysis},
  volume={57},
  number={2},
  pages={865--891},
  year={2023},
  publisher={EDP Sciences}
}

@article{sands2024high,
  title={{High-order adaptive rank integrators for multi-scale linear kinetic transport equations in the hierarchical Tucker format}},
  author={Sands, William A and Guo, Wei and Qiu, Jing-Mei and Xiong, Tao},
  journal={arXiv preprint arXiv:2406.19479},
  year={2024}
}

@article{hauck2023predictor,
  title={{A predictor-corrector strategy for adaptivity in dynamical low-rank approximations}},
  author={Hauck, Cory D and Schnake, Stefan},
  journal={SIAM Journal on Matrix Analysis and Applications},
  volume={44},
  number={3},
  pages={971--1005},
  year={2023},
  publisher={SIAM}
}

@article{bachmayr2017iterative,
  title={{Iterative methods based on soft thresholding of hierarchical tensors}},
  author={Bachmayr, Markus and Schneider, Reinhold},
  journal={Foundations of Computational Mathematics},
  volume={17},
  number={4},
  pages={1037--1083},
  year={2017},
  publisher={Springer}
}

@article{einkemmer2025review,
  title={{A review of low-rank methods for time-dependent kinetic simulations}},
  author={Einkemmer, Lukas and Kormann, Katharina and Kusch, Jonas and McClarren, Ryan G and Qiu, Jing-Mei},
  journal={Journal of Computational Physics},
  pages={114191},
  year={2025},
  publisher={Elsevier}
}

@article{adams1992diffusion,
  title={Diffusion synthetic acceleration of discontinuous finite element transport iterations},
  author={Adams, Marvin L and Martin, William R},
  journal={Nuclear science and engineering},
  volume={111},
  number={2},
  pages={145--167},
  year={1992},
  publisher={Taylor \& Francis}
}

@article{warsa2004krylov,
  title={{Krylov iterative methods and the degraded effectiveness of diffusion synthetic acceleration for multidimensional SN calculations in problems with material discontinuities}},
  author={Warsa, James S and Wareing, Todd A and Morel, Jim E},
  journal={Nuclear science and engineering},
  volume={147},
  number={3},
  pages={218--248},
  year={2004},
  publisher={Taylor \& Francis}
}

@article{dolgov2013tt,
  title={{TT-GMRES: solution to a linear system in the structured tensor format}},
  author={Dolgov, Sergey V},
  journal={Russian Journal of Numerical Analysis and Mathematical Modelling},
  volume={28},
  number={2},
  pages={149--172},
  year={2013},
  publisher={De Gruyter}
}

@article{dolgov2014alternating,
  title={{Alternating minimal energy methods for linear systems in higher dimensions}},
  author={Dolgov, Sergey V and Savostyanov, Dmitry V},
  journal={SIAM Journal on Scientific Computing},
  volume={36},
  number={5},
  pages={A2248--A2271},
  year={2014},
  publisher={SIAM}
}

@article{appelo2025lraa,
  title={{lrAA: Low-Rank Anderson Acceleration}},
  author={Appelo, Daniel and Cheng, Yingda},
  journal={arXiv preprint arXiv:2503.03909},
  year={2025}
}

@article{haut2026efficient,
  title={{Efficient SN-like and PN-like Dynamic Low Rank methods for Thermal Radiative Transfer}},
  author={Haut, Terry and Loffeld, John and Einkemmer, Lukas and Guthrey, Pierson and Brunner, Stefan and Schill, William},
  journal={arXiv preprint arXiv:2601.18705},
  year={2026}
}

@article{ceruti2022rank,
  title={{A rank-adaptive robust integrator for dynamical low-rank approximation}},
  author={Ceruti, Gianluca and Kusch, Jonas and Lubich, Christian},
  journal={BIT Numerical Mathematics},
  volume={62},
  number={4},
  pages={1149--1174},
  year={2022},
  publisher={Springer}
}

@article{koch2007dynamical,
  title={{Dynamical low-rank approximation}},
  author={Koch, Othmar and Lubich, Christian},
  journal={SIAM Journal on Matrix Analysis and Applications},
  volume={29},
  number={2},
  pages={434--454},
  year={2007},
  publisher={SIAM}
}

@article{dektor2025interpolatory,
  title={{Interpolatory dynamical low-rank approximation for the 3+ 3d Boltzmann--BGK equation}},
  author={Dektor, Alec and Einkemmer, Lukas},
  journal={Journal of Computational Physics},
  pages={114515},
  year={2025},
  publisher={Elsevier}
}

@article{dektor2021rank,
  title={{Rank-adaptive tensor methods for high-dimensional nonlinear PDEs}},
  author={Dektor, Alec and Rodgers, Abram and Venturi, Daniele},
  journal={Journal of Scientific Computing},
  volume={88},
  number={2},
  pages={36},
  year={2021},
  publisher={Springer}
}

@article{brunner2026domain,
title = {{Domain decomposition dynamical low-rank for multi-dimensional radiative transfer equations}},
journal = {Journal of Computational Physics},
pages = {115397},
year = {2026},
issn = {0021-9991},
doi = {https://doi.org/10.1016/j.jcp.2026.115397},
url = {https://www.sciencedirect.com/science/article/pii/S0021999126007461},
author = {Stefan Brunner and Lukas Einkemmer and Terry Haut},
}

@article{daniel1976reorthogonalization,
  title={{Reorthogonalization and stable algorithms for updating the Gram-Schmidt QR factorization}},
  author={Daniel, James W and Gragg, Walter Bill and Kaufman, Linda and Stewart, Gilbert W},
  journal={Mathematics of Computation},
  volume={30},
  number={136},
  pages={772--795},
  year={1976}
}

@article{deshpande2026multigroup,
  title={{Multigroup Thermal Radiation Transport with Tensor Trains}},
  author={Deshpande, Aditya S and Mullen, Patrick D and Gorodetsky, Alex A and Dolence, Joshua C and Meyer, Chad D and Miller, Jonah M and Roberts, Luke F},
  journal={The Astrophysical Journal},
  volume={1003},
  number={2},
  pages={193},
  year={2026},
  publisher={The American Astronomical Society}
}

@article{truong2024tensor,
  title={Tensor networks for solving the time-independent Boltzmann neutron transport equation},
  author={Truong, Duc P and Ortega, Mario I and Boureima, Ismael and Manzini, Gianmarco and Rasmussen, Kim {\O} and Alexandrov, Boian S},
  journal={Journal of computational physics},
  volume={507},
  pages={112943},
  year={2024},
  publisher={Elsevier}
}

@article{li2026asymptotic,
  title={An Asymptotic-Preserving Dynamical Low-Rank Semi-Lagrangian Method for Multiscale Linear Kinetic Transport Equations},
  author={Li, Shun and Jiang, Yan and Zhang, Mengping and Xiong, Tao},
  journal={ arXiv:2607.27736},
  year={2026}
}

@article{Guo2026HighlyER,
  title={{Highly Efficient Rank-Adaptive Sweep-based SI-DSA for the Radiative Transfer Equation via Mild Space Augmentation}},
  author={Wei Guo and Zhichao Peng},
  journal={ArXiv},
  year={2026},
  volume={abs/2603.25233},
  url={https://api.semanticscholar.org/CorpusID:286789887}
}

@article{simoncini2003theory,
author = {Simoncini, Valeria and Szyld, Daniel B.},
title = {Theory of Inexact Krylov Subspace Methods and Applications to Scientific Computing},
journal = {SIAM Journal on Scientific Computing},
volume = {25},
number = {2},
pages = {454-477},
year = {2003},
doi = {10.1137/S1064827502406415},

URL = { 
    
        https://doi.org/10.1137/S1064827502406415
    
    

},
eprint = { 
    
        https://doi.org/10.1137/S1064827502406415
    
    

}
}

@article{palitta2021convergence,
  author  = {Palitta, Davide and K{\"u}rschner, Patrick},
  title   = {On the convergence of {K}rylov methods with low-rank truncations},
  journal = {Numerical Algorithms},
  volume  = {88}, number = {3}, pages = {1383--1417}, year = {2021},
  doi     = {10.1007/s11075-021-01080-2},
}

@article{kormann2015semi,
  title={{A semi-Lagrangian Vlasov solver in tensor train format}},
  author={Kormann, Katharina},
  journal={SIAM Journal on Scientific Computing},
  volume={37},
  number={4},
  pages={B613--B632},
  year={2015},
  publisher={SIAM}
}

@article{bachmayr2026low,
  title={{Low-rank tensor product Richardson iteration for radiative transfer in plane-parallel geometry}},
  author={Bachmayr, Markus and Bardin, Riccardo and Schlottbom, Matthias},
  journal={SIAM Journal on Numerical Analysis},
  volume={64},
  number={1},
  pages={277--302},
  year={2026},
  publisher={SIAM}
}

@article{sands2025adaptive,
  title={{An adaptive-rank approach with greedy sampling for multi-scale BGK equations}},
  author={Sands, William A and Qiu, Jing-Mei and Hayes, Daniel and Zheng, Nanyi},
  journal={Journal of Computational Physics},
  pages={114523},
  year={2025},
  publisher={Elsevier}
}

@article{ghahremani2024cross,
  title={{Cross interpolation for solving high-dimensional dynamical systems on low-rank Tucker and tensor train manifolds}},
  author={Ghahremani, Behzad and Babaee, Hessam},
  journal={Computer Methods in Applied Mechanics and Engineering},
  volume={432},
  pages={117385},
  year={2024},
  publisher={Elsevier}
}

@article{dektor2025collocation,
  title={{Collocation methods for nonlinear differential equations on low-rank manifolds}},
  author={Dektor, Alec},
  journal={Linear Algebra and its Applications},
  volume={705},
  pages={143--184},
  year={2025},
  publisher={Elsevier}
}

@article{ortega2026tensor,
  title={{Tensor train neutron transport applied to nuclear reactor problems}},
  author={Ortega, Mario I and Truong, Duc P and Boureima, Ismael and Rasmussen, Kim {\O} and Alexandrov, Boian S},
  journal={Nucl. Sci. Eng.},
  volume={200},
  number={4},
  pages={800--820},
  year={2026},
  publisher={Taylor \& Francis}
}

@article{carrel2025interpolatory,
  title={{Interpolatory dynamical low-rank approximation: theoretical foundations and algorithms}},
  author={Carrel, Benjamin and Kressner, Daniel and Lam, Hei Yin and Vandereycken, Bart},
  journal={arXiv:2510.19518},
  year={2025}
}

@article{guo2026inexact,
  title={{An Inexact Low-Rank Source Iteration for Steady-State Radiative Transfer Equation with Diffusion Synthetic Acceleration}},
  author={Guo, Wei and Peng, Zhichao},
  journal={Journal of Computational Physics},
  pages={115427},
  year={2026},
  publisher={Elsevier}
}

\end{document}